\theoremstyle{plain}
\newtheorem{theorem}{Theorem}[section]
\newtheorem{proposition}[theorem]{Proposition}
\newtheorem{lemma}[theorem]{Lemma}
\newtheorem{corollary}[theorem]{Corollary}
\theoremstyle{definition}
\newtheorem{example}[theorem]{Example}
\theoremstyle{remark}
\begin{document}

\title{Capacities on a Finite Lattice}

\author{
Motoya Machida \\
{\normalsize Department of Mathematics, Tennessee Technological University,
Cookeville, TN} \\
{\normalsize\texttt{mmachida@tntech.edu}} \\
}

\date{\today}

\maketitle

\begin{abstract}
In his influential work~\cite{choquet} Choquet systematically studied
capacities on Boolean algebras in a topological space,
and gave a probabilistic interpretation for completely monotone
(and completely alternating) capacities.
Beyond complete monotonicity
we can view a capacity
as a marginal condition for probability distribution
over the distributive lattice of dual order ideals.
In this paper we discuss a combinatorial approach
when capacities are defined over a finite lattice,
and investigate Fr\'{e}chet bounds given the marginal condition,
probabilistic interpretation of difference operators,
and stochastic inequalities with completely monotone capacities.

\medskip\par\noindent
{\em AMS\/} 2010 {\em subject classifications.\/}
Primary 60C05;
secondary 60E15.

\medskip\par\noindent
{\em Keywords:\/}
Capacity;
difference operators;
M\"{o}bius inversion;
Fr\'{e}chet bounds;
stochastic inequalities.
\end{abstract}

\section{Introduction}
\setcounter{equation}{0}
\label{intro}

Let $L$ be a finite lattice with partial ordering~$\le$,
and let $\hat{0}$ and $\hat{1}$
denote the minimum and the maximum element of $L$.
A monotone function $\varphi$ on $L$ is called
a \emph{capacity} if $\varphi(\hat{0}) = 0$ and $\varphi(\hat{1}) = 1$.
Let $\mathcal{L}$ denote the collection of nonempty dual order ideals in $L$,
and let $\mathcal{X}$ be an $\mathcal{L}$-valued random variable
on some probability space $(\Omega, \mathbb{P})$,
distributed as $\mathbb{P}(\mathcal{X} = V) = f(V)$.
Assuming $\mathbb{P}(\hat{0}\in\mathcal{X}) = 0$,
we can construct a capacity $\varphi$ by
\begin{equation}\label{marginal}
\varphi(x) = \mathbb{P}(x \in \mathcal{X}),
\quad x \in L .
\end{equation}
From another viewpoint,
the collection of capacities on $L$ is a convex polytope,
any element of which can be represented as the convex combination
\begin{equation}\label{extreme.rep}
  \varphi(x) = \sum_{V\in\mathcal{L}} f(V) \chi_{V}(x),
\quad x \in L,
\end{equation}
where $\chi_{V}$ denotes an indicator function on $V$.
In the way of formulating \eqref{extreme.rep},
the weight $f(V)$ is viewed as
a \emph{probability mass function} (pmf)
for $\mathcal{X}$,
by which \eqref{extreme.rep} is deemed to be \eqref{marginal}.
This probabilistic interpretation of capacity
was first considered by Choquet~\cite{choquet}
and independently by Murofushi and Sugeno~\cite{murofushi}.
It should be noted, however, that
the choice of $f$ is not necessarily unique
(see Examples~\ref{cap.example} and~\ref{cm.example}).

Let $X$ be an $L$-valued random variable,
distributed as $\mathbb{P}(X = x) = f(x)$.
If $f(\hat{0}) = 0$ then
the \emph{cumulative distribution function} (cdf)
\begin{equation}\label{cdf}
  \varphi(x) = \sum_{y \le x} f(y),
  \quad x \in L,
\end{equation}
becomes a capacity,
also known as a \emph{belief function} in \cite{grabisch}.
The function $f$ in \eqref{cdf} is called the \emph{M\"{o}bius inverse} of $\varphi$.

For $a_1,a_2,\ldots \in L$,
we define the \emph{difference operator} $\nabla_{a_1}$ by
\begin{equation}\label{diff}
\nabla_{a_1} \varphi(x)
= \varphi(x) - {\varphi(x\wedge a_1)},
\quad x \in L,
\end{equation}
and the \emph{successive difference operator}
$\nabla_{a_1,\ldots,a_n}$ recursively by
\begin{equation}\label{succ.diff}
\nabla_{a_1,\ldots,a_n} \varphi
= \nabla_{a_n}(\nabla_{a_1,\ldots,a_{n-1}} \varphi),
\quad n = 2,3,\ldots.
\end{equation}
Then the monotonicity of $\varphi$
is characterized by $\nabla_a \varphi \ge 0$ for any $a \in L$.
Moreover, 
if $\nabla_{a_1,\ldots,a_n} \varphi \ge 0$
for any $a_1,\ldots,a_n \in L$ and for any $n \ge 1$
then $\varphi$ is called \emph{completely monotone}
(or monotone of order $\infty$; see \cite{choquet}).
The complete monotonicity of $\varphi$
is necessary and sufficient for the existence
of a (necessarily unique) pmf satisfying \eqref{cdf}.
This crucial observation
was made by Choquet~\cite{choquet}
for the class of compact sets in a topological space,
and it is now known as the Choquet theorem
which has been instrumental in the studies of random sets.
See \cite{molchanov} for a comprehensive review on random sets
on topological spaces.
This result in case of lattices
was due to Norberg~\cite{norberg.lattice}
who studied measures on continuous posets.

By equipping $\mathcal{L}$
with the order relation $U \preceq V$ by $U \supseteq V$,
we obtain the distributive lattice $\mathcal{L}$
which embeds $L$
as the subposet $\mathcal{L}_0 := \{\langle{a}\rangle^*: a \in L\}$
of principal dual order ideals.
Then we can introduce a completely monotone capacity $\Phi$ on $\mathcal{L}$,
and call it a \emph{completely monotone extension} of $\varphi$
if it satisfies the marginal condition
\begin{equation}\label{marginal.phi}
\varphi(x) = \Phi(\langle{x}\rangle^*),
\quad x \in L .
\end{equation}
The marginal condition \eqref{marginal.phi}
is equivalent to \eqref{extreme.rep},
and the pmf $f(V)$ can be obtained from the M\"{o}bius inversion of $\Phi$.
By the same token,
\eqref{marginal} is the marginal condition \eqref{marginal.phi}
when
$\Phi(U) = \mathbb{P}(\mathcal{X} \preceq U)$
is a cdf for $\mathcal{X}$.

In Section~\ref{difference} we investigate the properties of the M\"{o}bius inversion
by which the successive difference operators are fully characterized.
Particularly we can show the Choquet theorem for a finite lattice.
Consequently, we can represent the successive difference operator
\begin{equation}\label{nabla.rep}
\nabla_{a_1,\ldots,a_n} \varphi(x)
= \mathbb{P}(X \le x,\, X \not\le a_i \mbox{ for all $i=1,\ldots,n$})
\end{equation}
when $\varphi$ is completely monotone.

In Section~\ref{extension}
we consider the optimal bounds for $\Phi(U)$,
called \emph{Fr\'{e}chet bounds},
subject to the marginal condition \eqref{marginal.phi}.
We present a combinatorial approach to the
Fr\'{e}chet bounds,
and formulate the optimal lower bound
$\lambda(\varphi; a,b)$ for 
$\Phi(\langle{a,b}\rangle^*)$
at the dual order ideal $\langle{a,b}\rangle^*$ generated by a pair $\{a,b\}$ of $L$.
We can introduce a difference operator
by replacing $\varphi(a\wedge x)$
with $\lambda(\varphi; a,x)$ in \eqref{diff},
and call it ``$\lambda$-difference,''
denoted by $\Lambda_{a_1} \varphi$.
The resulting successive $\lambda$-difference operator
$\Lambda_{a_1,\ldots,a_n} \varphi$
parallels the characterization of $\nabla_{a_1,\ldots,a_n} \varphi$
via \eqref{nabla.rep}.
In Section~\ref{lambda.operators}
we can show that
there exists an $\mathcal{L}$-valued random variable $\mathcal{X}$
satisfying
\begin{equation}\label{Lambda.rep}
\Lambda_{a_1,\ldots,a_n} \varphi(x)
= \mathbb{P}(x\in\mathcal{X},\, a_i\not\in\mathcal{X}
    \mbox{ for all $i=1,\ldots,n$})
\end{equation}
given the marginal condition \eqref{marginal}.

In Section~\ref{prob}
we briefly discuss completely alternating capacities
and their probabilistic interpretation in terms of dual capacities.
Then we investigate a stochastic comparison
between $\varphi(x) = \mathbb{P}(x \in \mathcal{X})$
and $\psi(y) = \mathbb{P}(Y \le y)$,
and obtain a sufficient condition for $\mathbb{P}(Y \in \mathcal{X}) = 1$,
which is characterized by
the two types of difference operator introduced earlier.

Our notation of set operations is fairly standard.
The set difference $A\setminus B$ is defined by
$\{x\in A: x\not\in B\}$,
and the inclusion relation $A \subset B$
means that $A$ is a strictly smaller subset of $B$.

\section{Successive difference functionals}
\setcounter{equation}{0}
\label{difference}

By $R(L)$ we denote the space of real-valued functions on $L$.
In this section we consider \eqref{succ.diff} defined over $\varphi \in R(L)$.
The operator $\nabla_{a_1,\ldots,a_{n}}$
does not depend on the order of $a_i$'s.
It is also easy to see that
$\nabla_{a_1,\ldots,a_{n}}\varphi(x) = 0$
if $x \le a_i$ for some $i \le n$;
in particular,
if $a_n \le a_i$ for some $i \le n-1$
then
$\nabla_{a_1,\ldots,a_{n}}\varphi(x)
= \nabla_{a_1,\ldots,a_{n-1}}\varphi(x)
- \nabla_{a_1,\ldots,a_{n-1}}\varphi(x\wedge a_n)
= \nabla_{a_1,\ldots,a_{n-1}}\varphi(x)$.
Thus,
we can introduce the \emph{successive difference functional}
$\nabla_{A}^b \varphi = \nabla_{a_1,\ldots,a_n} \varphi(b)$
for a nonempty subset $A = \{a_1,\ldots,a_n\}$ of $L$ and $b \in L$.
We can expand it to
\begin{equation}\label{nabla.expansion}
\nabla_A^b \varphi =
  \sum_{A' \subseteq A} (-1)^{|A'|}
  \varphi({\textstyle\bigwedge}\! A'\wedge b) ,
\quad \varphi \in R(L),
\end{equation}
where
$$
\textstyle\bigwedge\! A' = \begin{cases}
  \hat{1} & \mbox{ if $A' = \emptyset$; } \\
  \bigwedge_{a\in A'} a & \mbox{ if $A' \neq \emptyset$, }
\end{cases}
$$
denotes the greatest lower bound of a subset $A'$ of $L$.
The M\"{o}bius inverse $f$ in \eqref{cdf} 
is uniquely determined by
\begin{equation}\label{mobius.inverse}
  f(x) = \sum_{y \le x} \varphi(y) \mu(y,x),
\end{equation}
where $\mu$ is called the \emph{M\"{o}bius function}.

Here we denote the half-open interval $\{x\in L: a \le x < b\}$
by $[a,b)$.
We say ``$b$ covers $a$''
if $a < b$ and there is no other element
between $a$ and $b$ (i.e., $[a,b) = \{a\}$),
and ``$A'$ dominates $A$''
if $A' \subseteq A$ and
for any $x \in A$ there exists some $y \in A'$ satisfying $x \le y$.
It is easy to see that
$\nabla_A^b =  \nabla_{A'}^b$
if $A'$ is a dominating subset of $A$.

The M\"{o}bius function over the lattice $L$
can be constructed via the ``cross-cut'' property of Lemma~\ref{cross-cut}.

\begin{lemma}[Corollary~3.9.4 of Stanley~\cite{stanley}]
\label{cross-cut}
Let $a < b$, and let $C \subseteq [a,b)$.
If $C$ dominates $[a,b)$ then
the M\"{o}bius function satisfies
$$
\mu(a,b) = \sum\limits_{k=1}^{|C|} (-1)^k N_k
$$
where $|C|$ denotes the number of elements in $C$,
and $N_k$ is the number of $k$-element subsets $C'$ of $C$
satisfying $\bigwedge\! C' = a$.
\end{lemma}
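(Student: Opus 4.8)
The plan is to carry out the computation inside the interval lattice $[a,b]$, regarded as a lattice with bottom $a$ and top $b$, and to introduce an auxiliary signed counting function whose inclusion--exclusion collapse pins down $\mu(a,b)$. The first observation to record is that for a nonempty $X \subseteq C \subseteq [a,b)$ one has $a \le \bigwedge X \le c < b$ for any $c \in X$, so the meet again lies in $[a,b)$; thus every quantity below genuinely lives in $[a,b]$, and I take the empty meet to be the top element $b$ of this interval.

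First I would define, for each $y \in [a,b]$,
$$ h(y) = \sum_{\substack{X \subseteq C \\ \bigwedge X \ge y}} (-1)^{|X|} . $$
Because $\bigwedge X \ge y$ holds precisely when every element of $X$ is $\ge y$, the admissible $X$ are exactly the subsets of $C_{\ge y} := \{c \in C : c \ge y\}$, whence $h(y) = (1-1)^{|C_{\ge y}|}$, equal to $1$ when $C_{\ge y} = \emptyset$ and to $0$ otherwise. The key step is to evaluate this indicator by means of domination: if $y < b$ then $y \in [a,b)$, so domination yields some $c \in C$ with $y \le c$, forcing $C_{\ge y} \ne \emptyset$ and $h(y) = 0$; and for $y = b$ no element of $C \subseteq [a,b)$ can satisfy $c \ge b$, so $C_{\ge b} = \emptyset$ and $h(b) = 1$. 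Hence $h$ is supported at the single point $b$.

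Next I would group subsets by the value of their meet, setting $M(z) = \sum_{X \subseteq C,\, \bigwedge X = z} (-1)^{|X|}$ for $z \in [a,b]$, so that $h(y) = \sum_{z \ge y} M(z)$. M\"{o}bius inversion on the interval $[a,b]$ then gives $M(z) = \sum_{y \ge z} \mu(z,y)\, h(y) = \mu(z,b)$, since $h$ vanishes off $b$; specializing to $z = a$ yields $M(a) = \mu(a,b)$. It remains to read off $M(a)$: the empty subset has meet $b \ne a$ and so is excluded, while the nonempty $X$ with $\bigwedge X = a$ and $|X| = k$ are counted by $N_k$, so $M(a) = \sum_{k=1}^{|C|} (-1)^k N_k$, which is the asserted value.

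I expect the main obstacle to be the bookkeeping at the two boundary conventions rather than any deep lattice theory. One must verify that meets of nonempty subsets of $C$ remain in $[a,b)$ so that $M$ and the inversion really take place over $[a,b]$, and that the empty meet (the top) is correctly discarded in the final count because $a \ne b$. The only genuinely structural input is the collapse of $h$ to the indicator of $\{b\}$, and here domination must be applied in exactly the right direction; once that is in place the result follows from the standard combination of the binomial identity $(1-1)^m = 0$ and M\"{o}bius inversion over the interval $[a,b]$.
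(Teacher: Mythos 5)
This lemma is quoted verbatim from Stanley (Corollary~3.9.4) and the paper supplies no proof of it, so there is no in-paper argument to compare against; your proof is a correct and complete self-contained derivation, and it is essentially the standard one. The two structural points are handled exactly right: $h$ collapses to the indicator of $\{b\}$ precisely because domination forces $C_{\ge y}\neq\emptyset$ for every $y\in[a,b)$ while no element of $C\subseteq[a,b)$ reaches $b$, and dual M\"{o}bius inversion over the interval then gives $M(a)=\mu(a,b)$. The boundary bookkeeping also checks out: meets of nonempty subsets of $C$ stay in $[a,b)$ because $[a,b]$ is a sublattice of $L$, the empty subset (with meet conventionally set to $b$) is needed for the clean identity $h(y)=(1-1)^{|C_{\ge y}|}$ and is correctly discarded from $M(a)$ since $a<b$, and the M\"{o}bius function of the interval poset $[a,b]$ coincides with that of $L$ because $\mu(a,b)$ depends only on the interval. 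I see no gaps.
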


A nonempty subset of a poset is called \emph{antichain}
if any two distinct elements of the subset are incomparable;
a singleton $\{a\}$ is a trivial antichain.
Let $b \in L$ be fixed.
An $n$-element subset $A = \{a_1,\ldots,a_n\}$ of $L$
is said to be a \emph{$b$-meet antichain}
if $\{a_1\wedge b,\ldots,a_n\wedge b\}$ is an $n$-element antichain.
We call a singleton $\{a\}$ a trivial $b$-meet antichain
only when $b \not\le a$.

By $L_A^b := \{\bigwedge\! A' \wedge b: A' \subseteq A\}$
we denote the induced subposet of $L$.
Then $L_A^b$ is a lattice with the minimum $\bigwedge\! A \wedge b$,
and shares the same meet $\wedge$ with $L$.
If $A = \{a_1,\ldots,a_n\}$ is a $b$-meet antichain,
then the maximum $b$ of $L_A^b$
covers exactly $n$ elements
$a_1\wedge b,\ldots,a_n\wedge b$.

\begin{lemma}\label{nabla.mobius}
Let  $A$ be a $b$-meet antichain,
and let $\mu_A^b$ be the M\"{o}bius function
of the lattice $L_A^b$.
Then
$$
\nabla_A^b \varphi
= \sum_{x \in L_A^b} \varphi(x) \mu_A^b(x,b) .
$$
\end{lemma}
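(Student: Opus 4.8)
The plan is to start from the explicit expansion \eqref{nabla.expansion} and group its terms according to the value that $\bigwedge\! A' \wedge b$ assumes in $L_A^b$. Writing $c_i := a_i\wedge b$, the hypothesis that $A$ is a $b$-meet antichain guarantees that $c_1,\ldots,c_n$ are $n$ \emph{distinct} elements, so that $A' \mapsto \{c_i : a_i \in A'\}$ is a cardinality-preserving bijection from the subsets of $A$ onto the subsets of $\{c_1,\ldots,c_n\}$, carrying $\bigwedge\! A'\wedge b$ to $\bigwedge_{c_i\in C'} c_i$ (with the empty meet read as $b$). Collecting the coefficient of each $\varphi(x)$ in \eqref{nabla.expansion} then reduces the claim to the coefficient identity
$$
\mu_A^b(x,b) = \sum_{\bigwedge C' = x} (-1)^{|C'|},
\quad x \in L_A^b,
$$
where $C'$ ranges over subsets of $\{c_1,\ldots,c_n\}$.

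The main step is to verify this identity, and the natural tool is the cross-cut Lemma~\ref{cross-cut} applied to the interval $[x,b]$ of $L_A^b$. First I would note that, because $b$ covers exactly $c_1,\ldots,c_n$ in $L_A^b$, the coatoms of $[x,b]$ are precisely $C := \{c_i : x \le c_i\}$; being the maximal elements of $[x,b)$, this $C$ lies in $[x,b)$ (each $c_i < b$) and dominates $[x,b)$. The observation that closes the gap between the two formulas is that the constraint $\bigwedge C' = x$ \emph{already} forces $C'\subseteq C$: if $\bigwedge C' = x$ and $c_i \in C'$ then $x = \bigwedge C' \le c_i$. Hence, for $x < b$, the sum over all $C'\subseteq\{c_1,\ldots,c_n\}$ with $\bigwedge C' = x$ coincides with the sum over nonempty $C'\subseteq C$ with $\bigwedge C' = x$ (the empty set is excluded since it yields $b$), and this is exactly $\sum_{k\ge 1}(-1)^k N_k$, which Lemma~\ref{cross-cut} identifies with $\mu_A^b(x,b)$.

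Finally I would dispose of the boundary value $x = b$ separately, since Lemma~\ref{cross-cut} requires $x < b$. Here only $C' = \emptyset$ satisfies $\bigwedge C' = b$ (each $c_i$ is strictly below $b$, so any nonempty meet of the $c_i$ stays below $b$), giving coefficient $(-1)^0 = 1 = \mu_A^b(b,b)$. Combining the two cases yields the coefficient identity for every $x\in L_A^b$, and summing against $\varphi(x)$ gives the asserted formula. I expect the only real obstacle to be the bookkeeping in the cross-cut step, namely confirming that the defining constraint $\bigwedge C' = x$ confines $C'$ to the coatoms of $[x,b]$; this is precisely what makes the global alternating sum \eqref{nabla.expansion} collapse cleanly onto the local M\"{o}bius values $\mu_A^b(x,b)$.
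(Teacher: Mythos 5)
Your proposal is correct and follows essentially the same route as the paper: apply the cross-cut Lemma~\ref{cross-cut} in $L_A^b$ with the dominating set $C=\{a\wedge b: x\le a\wedge b,\ a\in A\}$ to identify the coefficient of $\varphi(x)$ in the expansion \eqref{nabla.expansion} with $\mu_A^b(x,b)$, and treat $x=b$ separately via the empty subset. Your write-up merely makes explicit two points the paper leaves implicit (the cardinality-preserving bijection $A'\mapsto\{a_i\wedge b: a_i\in A'\}$ afforded by the $b$-meet antichain hypothesis, and the fact that $\bigwedge C'=x$ forces $C'\subseteq C$), so no substantive difference.
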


\begin{proof}
Let $x \in L_A^b\setminus\{b\}$ be fixed,
and let
$C_x = \{a\wedge b:  x \le a\wedge b,\,a \in A\}$
be a dominating subset of
$\{z \in L_A^b: x \le z < b\}$.
By Lemma~\ref{cross-cut}
we obtain
$$
\mu_A^b(x,b)
= \sum_{k=1}^{|C_x|} (-1)^k N_k
= \sum_{A' \subseteq A} (-1)^{|A'|}
    \chi_{\{{\bigwedge}\! A'\wedge b = x\}},
$$
where
$$
  \chi_{\{\cdots\}} = \begin{cases}
    1 & \mbox{ if $\{\cdots\}$ is true; } \\
    0 & \mbox{ if $\{\cdots\}$ is false, }
  \end{cases}
$$
is the indicator function for the statement $\{\cdots\}$.
Note that the right-hand expression of summation also produces
the value $\mu(b,b) = 1$ when $x = b$.
Thus,
we obtain
$$
\sum_{x \in L_A^b} \varphi(x) \mu_A^b(x,b)
= \sum_{x \in L_A^b} \varphi(x)
\sum_{A' \subseteq A} (-1)^{|A'|}
  \chi_{\{{\bigwedge}\! A'\wedge b = x\}},
$$
which is equal to \eqref{nabla.expansion}.
\end{proof}

For the next lemma we assume that
$b \not\le a $ for any $a \in A$.
Then we can find
a subset $\tilde{A} \subseteq A$
such that (i) $\tilde{A}$ is a $b$-meet antichain
and (ii) $a \in A$ implies $a\wedge b \le a'\wedge b$
for some $a' \in \tilde{A}$,
and call it a ``maximal $b$-meet antichain'' of $A$.
And we can reduce $\nabla_A^b$ to $\nabla_{\tilde{A}}^b$.

\begin{lemma}\label{nabla.reduce}
If $\tilde{A}$ is a maximal $b$-meet antichain of $A$
then $\nabla_A^b = \nabla_{\tilde{A}}^b$.
\end{lemma}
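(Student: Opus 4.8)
The plan is to prove the identity by stripping $A$ down to $\tilde{A}$ one element at a time, each removal leaving the functional $\nabla_A^b$ unchanged. The whole argument rests on the expansion \eqref{nabla.expansion} together with the observation that, by associativity and idempotence of the meet, $\bigwedge A' \wedge b = \bigwedge_{a \in A'}(a \wedge b)$ for every nonempty $A' \subseteq A$; thus $\nabla_A^b$ is assembled entirely from the elements $a \wedge b$, $a \in A$, and it is comparisons among these that matter.

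The key step is a single-removal lemma: if $a_1, a_2 \in A$ are distinct and $a_1 \wedge b \le a_2 \wedge b$, then $\nabla_A^b = \nabla_{A \setminus \{a_1\}}^b$. To prove it I would split the sum \eqref{nabla.expansion} according to whether $a_1 \in A'$. The subsets with $a_1 \notin A'$ are precisely the subsets of $A \setminus \{a_1\}$, and they reassemble into $\nabla_{A \setminus \{a_1\}}^b \varphi$. The subsets with $a_1 \in A'$ I would pair off as $A' \leftrightarrow A' \cup \{a_2\}$, ranging over those $A'$ with $a_1 \in A'$ and $a_2 \notin A'$; for such a pair one has $\bigwedge A' \wedge b \le a_1 \wedge b \le a_2 \wedge b$, so adjoining $a_2$ does not alter the meet, $\bigwedge(A' \cup \{a_2\}) \wedge b = \bigwedge A' \wedge b$, while the cardinality changes by one and the sign flips. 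Hence these terms cancel in pairs and contribute nothing. Verifying this meet identity and checking that the pairing is a genuine bijection is the main obstacle; everything else is bookkeeping.

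Finally I would iterate the removal lemma. Since $\tilde{A} \subseteq A$, enumerate $A \setminus \tilde{A} = \{z_1,\ldots,z_k\}$ and delete the $z_i$ in turn, setting $A_0 = A$ and $A_i = A_{i-1} \setminus \{z_i\}$. Property (ii) of a maximal $b$-meet antichain supplies, for each $z_i$, an element $a' \in \tilde{A}$ with $z_i \wedge b \le a' \wedge b$; because no element of $\tilde{A}$ is ever removed, $a'$ remains in $A_{i-1}$ and is distinct from $z_i$, so the single-removal lemma applies at every stage and yields $\nabla_{A_{i-1}}^b = \nabla_{A_i}^b$. Chaining these equalities from $A$ down to $A_k = \tilde{A}$ gives $\nabla_A^b = \nabla_{\tilde{A}}^b$. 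The hypothesis $b \not\le a$ for all $a \in A$ enters only through the existence of $\tilde{A}$, and the whole reduction parallels the domination argument noted before Lemma~\ref{cross-cut}, with the order $\le$ replaced by comparison of meets with $b$.
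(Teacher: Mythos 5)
Your proof is correct and follows essentially the same route as the paper: both arguments delete the elements of $A\setminus\tilde{A}$ one at a time, each deletion justified by the domination $a\wedge b\le\tilde{a}\wedge b$ for some surviving $\tilde{a}\in\tilde{A}$. The only difference is bookkeeping --- you verify the single-removal step by a sign-reversing pairing $A'\leftrightarrow A'\cup\{a_2\}$ inside the full expansion \eqref{nabla.expansion}, whereas the paper packages the identical cancellation into the four-term operator identity $\nabla_A^b=\nabla_{A''}^b-\nabla_{A''}^{b\wedge\tilde{a}}-\nabla_{A''}^{b\wedge a}+\nabla_{A''}^{b\wedge\tilde{a}\wedge a}$.
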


\begin{proof}
Assume $\tilde{A} \subset A$.
Let $a \in A \setminus \tilde{A}$
and $\tilde{a} \in \tilde{A}$
be such that $a\wedge b \le \tilde{a}\wedge b$.
Then we set
$A' = A\setminus\{a\}$
and $A'' = A'\setminus\{\tilde{a}\}$,
and obtain
$$
\nabla_A^b
= \nabla_{A''}^b
- \nabla_{A''}^{b \wedge \tilde{a}}
- \nabla_{A''}^{b \wedge a}
+ \nabla_{A''}^{b \wedge \tilde{a} \wedge a}
= \nabla_{A'}^b.
$$
We repeat further reduction, if necessary,
until $A' = \tilde{A}$.
\end{proof}

Theorem~\ref{pi.theorem} verifies \eqref{nabla.rep}
when the M\"{o}bius inverse $f$ represents
the pmf for an $L$-valued random variable $X$.

\begin{theorem}\label{pi.theorem}
The M\"{o}bius inverse $f$ of $\varphi$ satisfies
\begin{equation}\label{pi.formula}
\nabla_A^b \varphi = \sum_{x\in\pi_A^b} f(x)
\end{equation}
where
$$
\pi_A^b = \{x\in L: x \le b, x \not\le a
\mbox{\rm\ for all $a \in A$ }\}.
$$
\end{theorem}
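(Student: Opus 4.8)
The plan is to reduce everything to an inclusion--exclusion computation on the poset $L$, using the expansion \eqref{nabla.expansion} of $\nabla_A^b\varphi$ together with the cumulative relation \eqref{cdf} that defines the M\"obius inverse. First I would substitute $\varphi(z)=\sum_{y\le z} f(y)$ into \eqref{nabla.expansion} to obtain
$$
\nabla_A^b\varphi
= \sum_{A'\subseteq A}(-1)^{|A'|}\sum_{y\,\le\,{\bigwedge}\!A'\wedge b} f(y).
$$

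The key observation is that, for a fixed $y$, the constraint $y\le{\bigwedge}\!A'\wedge b$ is equivalent to the two conditions $y\le b$ and $A'\subseteq A_y$, where $A_y:=\{a\in A: y\le a\}$; indeed $y\le{\bigwedge}\!A'$ holds if and only if $y\le a$ for every $a\in A'$. I would then interchange the order of summation, summing first over the admissible $y\le b$ and collecting, for each such $y$, the coefficient
$$
\sum_{A'\subseteq A_y}(-1)^{|A'|}
= \sum_{k=0}^{|A_y|}\binom{|A_y|}{k}(-1)^k
= (1-1)^{|A_y|},
$$
which equals $1$ when $A_y=\emptyset$ and $0$ otherwise.

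Since $A_y=\emptyset$ says exactly that $y\not\le a$ for all $a\in A$, only the values $y\le b$ with $y\not\le a$ for every $a\in A$ survive, each contributing its full weight $f(y)$. This is precisely the sum over $\pi_A^b$, which yields \eqref{pi.formula}. I expect the two summation steps and the binomial identity to be entirely routine; the one point deserving care is the clean statement of the meet--membership equivalence $\{y\le{\bigwedge}\!A'\wedge b\}\Leftrightarrow\{y\le b,\ A'\subseteq A_y\}$, since it is this use of the finite lattice structure that licenses swapping the roles of $A'$ and $y$ and thereby collapses the alternating sum to the desired subset of $L$.
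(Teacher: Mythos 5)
Your proof is correct, and it takes a genuinely different route from the paper. You work directly from the expansion \eqref{nabla.expansion}, substitute the cumulative relation $\varphi(z)=\sum_{y\le z}f(y)$, exchange the order of summation, and collapse the alternating sum via $\sum_{A'\subseteq A_y}(-1)^{|A'|}=(1-1)^{|A_y|}$; the only structural input is the universal property of the meet, namely that $y\le\bigwedge A'\wedge b$ if and only if $y\le b$ and $y\le a$ for every $a\in A'$ (with the convention $\bigwedge\emptyset=\hat{1}$ handling $A'=\emptyset$), and this is exactly right. The paper instead first reduces $A$ to a maximal $b$-meet antichain (Lemma~\ref{nabla.reduce}), then identifies $\nabla_A^b\varphi$ with a M\"obius-function sum over the induced sublattice $L_A^b$ via the cross-cut theorem (Lemma~\ref{nabla.mobius}), and finally recognizes $\nabla_A^b\varphi$ as the value at $b$ of the M\"obius inverse of $\varphi$ restricted to $L_A^b$. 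Your argument is shorter, more elementary, and self-contained --- it does not need the $b$-meet antichain machinery or the cross-cut lemma at all for this theorem --- while the paper's route buys structural insight into how $\nabla_A^b$ is itself a M\"obius inversion on the sublattice $L_A^b$, which is the perspective the surrounding section is built around. Both are complete proofs of \eqref{pi.formula}.
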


\begin{proof}
If $b \le a$ for some $a \in A$ then
$\pi_A^b = \emptyset$,
for which we stipulate that the summation in~\eqref{pi.formula} vanishes.
Otherwise, we can find 
a maximal $b$-meet antichain $\tilde{A}$ of $A$.
It is easily observed that $\pi_{\tilde{A}}^b = \pi_A^b$;
thus,
it suffices to show \eqref{pi.formula} for $\tilde{A}$
by Lemma~\ref{nabla.reduce}.
Henceforth, we assume that $A$ is a $b$-meet antichain.

Here we can define the function $\tilde{f}$ on the lattice $L_A^b$
by setting
$$
  \tilde{f}(a) = \sum_{x \in \pi^a} f(x),
\quad a \in L_A^b,
$$
where
$\pi^a =
{\{x\in L: x \le a, x \not\le a'
\mbox{ whenever $a' < a$ in $L_A^b$ }\}}$.
Since $\{\pi^a\}_{a\in L_A^b}$ partitions $L$,
we obtain
$$
  \varphi(a) = \sum_{\text{$a' \le a$ in $L_A^b$}} \tilde{f}(a'),
\quad a \in L_A^b,
$$
which implies that
$\tilde{f}$ is the M\"{o}bius inverse of $\varphi$ over $L_A^b$.
In particular, we can show that $\tilde{f}(b) = \nabla_A^b \varphi$
by Lemma~\ref{nabla.mobius}.
Note that $\pi^b = \pi_A^b$.
Therefore, $\tilde{f}(b)$ is equal to the right-hand side of \eqref{pi.formula}.
\end{proof}

The following result is the immediate corollary which
implies the Choquet theorem for capacities on a finite lattice.

\begin{corollary}\label{cm}
Assume $\varphi(\hat{0}) \ge 0$.
The M\"{o}bius inverse $f$ of $\varphi$ is nonnegative
if and only if $\varphi$ is completely monotone.
\end{corollary}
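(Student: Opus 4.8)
The plan is to read both implications directly off Theorem~\ref{pi.theorem}, which writes every successive difference functional as a partial sum of the M\"{o}bius inverse. First I would record that, after collapsing repeated arguments through dominating subsets (as noted before Lemma~\ref{cross-cut}), complete monotonicity of $\varphi$ amounts precisely to the requirement that $\nabla_A^b\varphi \ge 0$ for every nonempty $A \subseteq L$ and every $b \in L$.

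The implication ``$f \ge 0 \implies \varphi$ completely monotone'' is then immediate: Theorem~\ref{pi.theorem} gives $\nabla_A^b\varphi = \sum_{x\in\pi_A^b} f(x)$, and nonnegativity of each term forces $\nabla_A^b\varphi \ge 0$ for all nonempty $A$ and all $b$. No assumption on $\varphi(\hat{0})$ enters in this direction.

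For the converse I would isolate each value $f(b)$ as a single difference functional. The minimum is handled separately: evaluating \eqref{cdf} at $\hat{0}$ gives $f(\hat{0}) = \varphi(\hat{0}) \ge 0$ by the standing hypothesis. For $b \neq \hat{0}$, I would take $A$ to be the set of elements covered by $b$; this $A$ is nonempty, and $b \not\le a$ for every $a \in A$. The combinatorial crux is that in a finite lattice every $x < b$ lies below some lower cover of $b$ (pass to a maximal chain from $x$ to $b$), whence $\pi_A^b$ reduces to the singleton $\{b\}$. Theorem~\ref{pi.theorem} then yields $\nabla_A^b\varphi = f(b)$, and complete monotonicity delivers $f(b) = \nabla_A^b\varphi \ge 0$.

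The step I expect to require the most care is this last choice of $A$ together with the verification that $\pi_A^b = \{b\}$; everything else is bookkeeping. It is also worth flagging why the hypothesis $\varphi(\hat{0}) \ge 0$ cannot be dropped: since $\nabla_A^{\hat{0}}\varphi = 0$ identically, complete monotonicity places no constraint on $f(\hat{0})$, so the extra assumption is exactly what is needed to pin down the sign at the minimum.
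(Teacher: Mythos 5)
Your proposal is correct and follows essentially the same route as the paper: necessity via the nonnegative-sum representation of Theorem~\ref{pi.theorem}, and sufficiency by taking $A$ to be the lower covers of $b$ so that $\pi_A^b=\{b\}$ and $f(b)=\nabla_A^b\varphi\ge 0$, with $f(\hat{0})=\varphi(\hat{0})\ge 0$ handled separately. Your added justification that every $x<b$ lies below some lower cover (via a maximal chain) and your remark on why the hypothesis $\varphi(\hat{0})\ge 0$ is indispensable are both accurate refinements of the paper's terser argument.
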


\begin{proof}
Theorem~\ref{pi.theorem} clearly implies
the necessity of complete monotonicity.
Note that $f(\hat{0}) = \varphi(\hat{0}) \ge 0$.
For any $b > \hat{0}$
we can choose
the collection $A$ of all the elements covered by $b$,
and obtain $\pi_A^b = \{b\}$ and $\nabla_A^b \varphi = f(b)$
in Theorem~\ref{pi.theorem}.
Thus, the complete monotonicity of $\varphi$
is also sufficient.
\end{proof}

A subset $V$ of $L$ is called
an \emph{order ideal} (or a \emph{down-set})
if $x \le y$ and $y \in V$ imply $x \in V$.
By $\langle A \rangle$
we denote the order ideal
$\{x\in L: {x \le a} \mbox{ for some } {a \in A}\}$
generated by a subset $A$ of $L$.
Then there is the one-to-one correspondence between
antichains $A$ and nonempty order ideals $V$ via $V = \langle A \rangle$
(cf.~\cite{stanley}).
Furthermore,
we have $\nabla_A^b \equiv \nabla_V^b$
since $A$ dominates $V = \langle A \rangle$.

\begin{proposition}\label{f.domain}
Suppose that $V$ is an order ideal of $L$.
Then the M\"{o}bius inverse $f$ of $\varphi$
has the support
$\{x\in L: f(x) \neq 0\}$ on $V$
if and only if 
$\nabla_V^b \varphi = 0$ for every $b \not\in V$.
\end{proposition}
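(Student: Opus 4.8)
The plan is to deduce the statement directly from Theorem~\ref{pi.theorem}, which already expresses a successive difference functional as a partial sum of the Möbius inverse $f$. Taking the generating subset in that theorem to be $V$ itself (which we may assume nonempty, since $\nabla_V^b$ is only defined for a nonempty $V$), I would first record the identity
$$
\nabla_V^b \varphi = \sum_{x \in \pi_V^b} f(x),
\qquad
\pi_V^b = \{x \in L : x \le b,\ x \not\le a \text{ for all } a \in V\}.
$$
The crucial simplification is to identify $\pi_V^b$ explicitly. Because $V$ is a down-set, the condition ``$x \not\le a$ for all $a \in V$'' is equivalent to $x \notin V$: if $x \in V$ one may take $a = x$, while conversely $x \le a \in V$ forces $x \in V$. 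Hence $\pi_V^b = \{x \in L : x \le b\}\setminus V$, and
$$
\nabla_V^b \varphi = \sum_{x \le b,\ x \notin V} f(x)
\qquad \text{for every } b \in L.
$$

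Given this identity the forward implication is immediate: if the support of $f$ lies in $V$, then every term in the right-hand sum has $x \notin V$ and therefore vanishes, so $\nabla_V^b \varphi = 0$, in particular for every $b \notin V$.

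For the converse I would argue by induction on the finite poset $L \setminus V$ ordered by $\le$. Fixing $b \notin V$ and isolating the top term gives
$$
0 = \nabla_V^b \varphi = f(b) + \sum_{x < b,\ x \notin V} f(x),
$$
so that $f(b) = 0$ as soon as $f(x) = 0$ for every $x \notin V$ with $x < b$. Since $L$ is finite, the restriction of $<$ to $L \setminus V$ is well-founded; the minimal elements of $L \setminus V$ form the base case, where the sum collapses to $f(b)$ alone, and well-founded induction then yields $f(b) = 0$ for all $b \notin V$, that is, the support of $f$ lies in $V$.

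The argument is short once $\pi_V^b$ has been identified, and I expect no serious obstacle. The only point demanding a little care is the inductive bookkeeping in the converse---checking that isolating $f(b)$ leaves only strictly smaller elements of $L \setminus V$ and that the induction over $L \setminus V$ is well-founded---both of which are handled cleanly by the finiteness of $L$.
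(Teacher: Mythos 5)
Your proof is correct and follows essentially the same route as the paper: both arguments reduce the claim to the identity $\nabla_V^b\varphi = \sum_{x\le b,\ x\notin V} f(x)$ obtained from Theorem~\ref{pi.theorem} together with the observation that, since $V$ is a down-set, $\pi_V^b = \{x\le b\}\setminus V$. The only cosmetic difference is in the converse, where you run an explicit well-founded induction on $L\setminus V$ while the paper packages the same triangularity argument as uniqueness of the M\"{o}bius inverse on the induced subposet $L\setminus V$.
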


\begin{proof}
Let $A$ be the antichain corresponding to $V$
satisfying $V = \langle A \rangle$,
and let $\tilde{L}$ be the subposet of $L$
induced on the subset $L\setminus V$.
Then we can define the function $\tilde{\varphi}$ on $\tilde{L}$ by
setting
$$
  \tilde{\varphi}(b) = \sum_\text{$x \le b$ in $\tilde{L}$} f(x),
\quad b \in \tilde{L}.
$$
By restricting $f$ on $\tilde{L}$,
we can view $f$ as the M\"{o}bius inverse of $\tilde{\varphi}$.
By introducing the subset $\pi_A^b$ from Theorem~\ref{pi.theorem},
we can find that
$$
  \tilde{\varphi}(b) = \sum_{x \in \pi_A^b} f(x)
  = \nabla_A^b \varphi = \nabla_V^b \varphi .
$$
Hence,
$f \equiv 0$ on $\tilde{L}$ if and only if $\nabla_V^b \varphi = 0$ for all $b \in \tilde{L}$.
\end{proof}

\section{Completely monotone extensions}
\setcounter{equation}{0}
\label{extension}

A subset $U$ of $L$ is called
a \emph{dual order ideal}
(or an \emph{up-set}) if $x \in U$ and $x \le y$ imply $y \in U$.
By $\langle A \rangle^*$
we denote the up-set
$\{x\in L: {x \ge a} \mbox{ for some } {a \in A}\}$
generated by a subset $A$ of $L$;
thus, setting the one-to-one correspondence between
antichains $A$ and nonempty dual order ideals $U$
via $U = \langle A \rangle^*$.
We write simply $\langle a_1,\ldots,a_n \rangle^*$
if $A = \{a_1,\ldots,a_n\}$ is explicitly specified,
and particularly we call it \emph{principal}
when the up-set $\langle a \rangle^*$ is generated by a singleton $\{a\}$.
The collection $\mathcal{J}^*(L)$ of dual order ideals of $L$
is a distributive lattice ordered by inclusion (cf.~\cite{stanley}),
and so is the subposet of $\mathcal{J}^*(L)$
induced on the set of nonempty dual order ideals,
denoted by $\mathcal{L}$.
The poset $\mathcal{L}$
is poset-isomorphic to the distributive lattice of dual order ideals on the subposet
$L\setminus\{\hat{1}\}$.
In what follows we assume that
$\mathcal{L}$ is equipped with the reverse inclusion relation $\preceq$
so that $U \preceq V$ if $U \supseteq V$.

\begin{example}\label{lattice.example}
Let $L = \{\emptyset, 1, 2, 3, 12, 13, 23, 123\}$
be a three-element Boolean lattice ordered by inclusion,
where we express the subset $\{1, 2\}$ simply by ``$12$.''
Then the distributive lattice
\begin{align*}
\mathcal{L} = \{
&
  \langle{\emptyset}\rangle^*, \langle{1,2,3}\rangle^*,
  \langle{1,2}\rangle^*, \langle{1,3}\rangle^*, \langle{2,3}\rangle^*, 
  \langle{1,23}\rangle^*, \langle{2,13}\rangle^*, \langle{3,12}\rangle^*, 
  \langle{1}\rangle^*, \langle{2}\rangle^*, \langle{3}\rangle^*, \\
&
  \langle{12,13,23}\rangle^*,
  \langle{12,13}\rangle^*, \langle{12,23}\rangle^*, \langle{13,23}\rangle^*, 
  \langle{12}\rangle^*, \langle{13}\rangle^*, \langle{23}\rangle^*, \langle{123}\rangle^*
\} 
\end{align*}
has the minimum $\langle{\emptyset}\rangle^*$
and the maximum $\langle{123}\rangle^*$.
\end{example}

By $M_1(L)$ we denote the collection of nonnegative monotone functions on $L$,
and by $M_{\infty}(\mathcal{L})$ the collection of nonnegative completely monotone
functions on $\mathcal{L}$.
As $L$ is poset-isomorphic to the subposet $\mathcal{L}_0$
of $\mathcal{L}$ induced on the set of principal dual order ideals,
there is a natural projection $\Pi(\Phi) = \varphi$
via \eqref{marginal.phi}
from $\Phi\in M_{\infty}(\mathcal{L})$ to $\varphi \in M_1(L)$.
The map $\Pi$ is surjective, but not bijective unless $L$ is linearly ordered.
Proposition~\ref{surjective}
is given by Murofushi and Sugeno~\cite{murofushi}
who demonstrated a construction of \eqref{extreme.rep} by applying
a ``greedy method.''

\begin{proposition}\label{surjective}
The map $\Pi$ is surjective from $M_{\infty}(\mathcal{L})$ onto $M_1(L)$.
\end{proposition}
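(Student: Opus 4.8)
The plan is to obtain surjectivity from the Choquet theorem of Corollary~\ref{cm}: since $\mathcal{L}$ is itself a finite (distributive) lattice, any function $\Phi$ on $\mathcal{L}$ whose M\"obius inverse $g$ (the pmf of \eqref{cdf}) is nonnegative is automatically nonnegative and, by that corollary, completely monotone, hence lies in $M_\infty(\mathcal{L})$. Thus, given an arbitrary $\varphi \in M_1(L)$, it suffices to exhibit a nonnegative $g$ on $\mathcal{L}$ whose cumulative distribution $\Phi(U) = \sum_{V \preceq U} g(V)$ satisfies the marginal condition \eqref{marginal.phi}. I would first recast that condition: because $\mathcal{L}$ carries the reverse-inclusion order, $V \preceq \langle x\rangle^*$ holds exactly when $\langle x\rangle^* \subseteq V$, i.e.\ when $x \in V$; hence $\Phi(\langle x\rangle^*) = \sum_{V \ni x} g(V)$, and \eqref{marginal.phi} is equivalent to the marginal \eqref{marginal}, namely $\sum_{V \ni x} g(V) = \varphi(x)$ for every $x \in L$.

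The construction of $g$ I have in mind is a ``layer-cake'' slicing of $\varphi$, which is one concrete realization of the greedy method of Murofushi and Sugeno. For $t > 0$ put $V_t = \{x \in L : \varphi(x) \ge t\}$. Monotonicity of $\varphi$ makes each $V_t$ a dual order ideal, and $V_t$ is nonempty precisely for $0 < t \le \varphi(\hat 1)$ since it then contains $\hat 1$; so $t \mapsto V_t$ assumes finitely many values, all in $\mathcal{L}$. I then set
\[
g(V) = \mathrm{Leb}\bigl(\{\, t > 0 : V_t = V \,\}\bigr),
\]
the total length of the levels at which the superlevel set equals $V$. By construction $g \ge 0$, and $g$ is supported on $\mathcal{L}$ with total mass $\varphi(\hat 1)$.

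It then remains to verify the marginal and assemble the pieces. Fixing $x \in L$ and interchanging the sum with the length,
\[
\sum_{V \ni x} g(V)
= \mathrm{Leb}\bigl(\{\, t > 0 : x \in V_t \,\}\bigr)
= \mathrm{Leb}\bigl(\{\, t > 0 : \varphi(x) \ge t \,\}\bigr)
= \varphi(x),
\]
so $\Phi(\langle x\rangle^*) = \varphi(x)$, that is $\Pi(\Phi) = \varphi$. Finally, $g$ is by definition the M\"obius inverse of $\Phi$ and is nonnegative, while $\Phi$ evaluated at the minimum $\langle\hat 0\rangle^*$ of $\mathcal{L}$ equals $\varphi(\hat 0) \ge 0$; Corollary~\ref{cm}, applied to the finite lattice $\mathcal{L}$, therefore yields $\Phi \in M_\infty(\mathcal{L})$. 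As $\varphi$ was arbitrary, $\Pi$ is surjective.

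I expect the only genuinely substantive step to be the observation that the superlevel sets $V_t$ are automatically dual order ideals, so that their lengths furnish a bona fide nonnegative M\"obius inverse on $\mathcal{L}$; once this is seen, the marginal identity is a one-line length computation and complete monotonicity is delivered directly by Corollary~\ref{cm}. The remaining care is bookkeeping with the reverse-inclusion order, ensuring that ``$V \preceq \langle x\rangle^*$'' is read as ``$x \in V$'' and that all assigned mass sits on nonempty dual order ideals.
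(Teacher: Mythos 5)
Your proof is correct and follows essentially the same route as the paper: both construct the pmf by slicing $\varphi$ into superlevel sets (the paper's greedy map $U(t)=\{a:\varphi(a)>t\}$ with weights $r_i-r_{i-1}$ is exactly your layer-cake measure $g(V)=\mathrm{Leb}(\{t:V_t=V\})$), and both conclude complete monotonicity of the resulting $\Phi$ from its nonnegative M\"obius inverse via Corollary~\ref{cm}. The only differences are cosmetic (strict versus non-strict level sets, and your more explicit verification of the marginal identity).
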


\begin{proof}
If $\varphi \equiv 0$ then $\Phi \equiv 0$ obviously satisfies
$\Phi(\Phi) = \varphi$.
Assume $\varphi\in M_1(L)$ with $\varphi(\hat{1}) > 0$.
Then we can consider the map $U(t) = \{a\in L: \varphi(a) > t\}$
from $[0,\varphi(\hat{1}))$ to $\mathcal{J}^*(L)$.
It is a step-wise decreasing map
$U(t) \equiv U(r_{i-1})$ for $t \in [r_{i-1}, r_i)$
with $0 = r_0 < r_1 < \cdots < r_m = \varphi(\hat{1})$.
Then we can assign $f(V) = r_i - r_{i-1} > 0$ if $V = U(r_{i-1})$ for some $i$;
otherwise, $f(V) = 0$.
Clearly the marginal condition \eqref{extreme.rep} holds,
and
\begin{equation}\label{phi.extension}
  \Phi(U) = \sum_{V \preceq U} f(V)
\end{equation}
determines
$\Phi\in M_{\infty}(\mathcal{L})$ as desired.
\end{proof}

\begin{example}\label{cap.example}
Let $L$ be the Boolean lattice from Example~\ref{lattice.example}.
Then
$$
\varphi_c(x) = \begin{cases}
  1   & \text{ if $x = 123$; } \\
  c   & \text{ if $x = 12$, $13$, or $23$; } \\
  0   & \text{otherwise, }
\end{cases}
$$
is a capacity on $L$ if $0 \le c \le 1$.
By the greedy method
we can construct a completely monotone extension
$$
\Phi_c(U) = \begin{cases}
  1 & \text{ if $U = \langle{123}\rangle^*$; }\\
  c & \text{ if $\langle{12,13,23}\rangle^* \preceq U \prec \langle{123}\rangle^*$; }\\
  0 & \text{ otherwise. }
\end{cases}
$$
\end{example}

If $\varphi$ is completely monotone
then the M\"{o}bius inverse $f$ of $\varphi$
can induce the \emph{M\"{o}bius extension} $\Phi$ via \eqref{phi.extension}
by setting $f(\langle{x}\rangle^*) = f(a)$ for $x\in L$
and $f \equiv 0$ on $\mathcal{L}\setminus\mathcal{L}_0$.
The converse is also true:
If the M\"{o}bius inverse $f$ of
a completely monotone extension $\Phi$ of $\varphi$
has the support $\{U\in\mathcal{L}: f(U) \neq 0\}$ in $\mathcal{L}_0$
then $\varphi$ is completely monotone,
uniquely formulated by \eqref{cdf} with $f(x) = f(\langle{x}\rangle^*)$.

\begin{example}\label{cm.example}
In Example~\ref{cap.example}
we can find $\varphi_{1/3} \in M_{\infty}(L)$.
Then the M\"{o}bius inverse
$$
f(V) = \begin{cases}
  1/3 & \text{ if $V = \langle{12}\rangle^*$, $\langle{13}\rangle^*$,
                      or $\langle{23}\rangle^*$; } \\
  0     & \text{ otherwise, }
\end{cases}
$$
determines the M\"{o}bius extension $\Phi$ of $\varphi_{1/3}$.
\end{example}

The M\"{o}bius extension can be characterized by its values
at dual order ideals of the form $\langle{a,b}\rangle^*$.

\begin{proposition}\label{cm-prop}
$\Phi$ is the M\"{o}bius extension of $\varphi$
if and only if
\begin{equation}\label{ab-formula}
\Phi(\langle{a,b}\rangle^*) = \varphi(a \wedge b)
\mbox{ for every pair $\{a,b\}$. }
\end{equation}
\end{proposition}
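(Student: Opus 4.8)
The plan is to prove both implications at once through the probabilistic interpretation supplied by Corollary~\ref{cm} applied to the finite lattice $\mathcal{L}$. Since the very notion of a M\"{o}bius extension presupposes that $\Phi$ is a completely monotone extension of $\varphi$, I take this as the ambient hypothesis and begin by invoking Corollary~\ref{cm} on $\mathcal{L}$: the M\"{o}bius inverse $F$ of $\Phi$ is nonnegative, and since $\Phi(\langle\hat{1}\rangle^*) = \varphi(\hat{1}) = 1$ it is a pmf. Thus there is an $\mathcal{L}$-valued random variable $\mathcal{Y}$ with $F(V) = \mathbb{P}(\mathcal{Y} = V)$ and, via \eqref{phi.extension}, $\Phi(U) = \sum_{V \preceq U} F(V) = \mathbb{P}(\mathcal{Y} \supseteq U)$, the last equality holding because $V \preceq U$ means $V \supseteq U$.

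The key translation step uses that every realization of $\mathcal{Y}$ is a dual order ideal (an up-set). For any $a,b \in L$ one has $\mathcal{Y} \supseteq \langle{a,b}\rangle^*$ if and only if $a,b \in \mathcal{Y}$, since the generators $a,b$ lie in $\langle{a,b}\rangle^*$ while, conversely, $a,b \in \mathcal{Y}$ forces every element above $a$ or above $b$ into $\mathcal{Y}$. Hence $\Phi(\langle{a,b}\rangle^*) = \mathbb{P}(a \in \mathcal{Y},\, b \in \mathcal{Y})$, whereas the marginal condition (the case $a=b$ of \eqref{ab-formula}, or \eqref{marginal.phi}) gives $\varphi(a\wedge b) = \Phi(\langle{a\wedge b}\rangle^*) = \mathbb{P}(a\wedge b \in \mathcal{Y})$. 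Because $\mathcal{Y}$ is an up-set, $a \wedge b \in \mathcal{Y}$ always implies $a,b \in \mathcal{Y}$, so \eqref{ab-formula} is equivalent to $\mathbb{P}(a \in \mathcal{Y},\, b \in \mathcal{Y},\, a\wedge b \notin \mathcal{Y}) = 0$ for every pair, that is, to $\mathcal{Y}$ being almost surely closed under binary meets.

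I would then record the elementary lattice fact that a nonempty up-set $V$ of the finite lattice $L$ that is closed under $\wedge$ must be principal: its least element $m = \bigwedge_{v \in V} v$ belongs to $V$, whence $V = \langle{m}\rangle^*$. Consequently $\mathcal{Y}$ is almost surely meet-closed if and only if $\mathcal{Y} \in \mathcal{L}_0$ almost surely, which is exactly the statement that $F$ is supported on $\mathcal{L}_0$; by the remark preceding the proposition this is in turn equivalent to $\Phi$ being the M\"{o}bius extension of $\varphi$, with $F(\langle{x}\rangle^*)$ recovering the M\"{o}bius inverse of $\varphi$. Chaining these equivalences establishes the proposition in both directions simultaneously.

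The main obstacle is really the single idea on which everything turns: recognizing that the pairwise identity \eqref{ab-formula} is precisely the event-by-event assertion that the random up-set $\mathcal{Y}$ is closed under meets, together with the observation that meet-closed up-sets in a finite lattice are exactly the principal ones. Once this reformulation is in hand the argument is routine; the only points needing care are the degenerate case $\varphi \equiv 0$ (where $\Phi \equiv 0$ and both conditions hold vacuously) and the bookkeeping that ``$F$ supported on $\mathcal{L}_0$'' coincides with the definition of the M\"{o}bius extension rather than merely resembling it.
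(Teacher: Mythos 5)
Your argument is correct and is essentially the paper's own proof in probabilistic dress: the identity $\mathbb{P}(a,b\in\mathcal{Y}) = \mathbb{P}(a\wedge b\in\mathcal{Y}) + \mathbb{P}(a,b\in\mathcal{Y},\,a\wedge b\notin\mathcal{Y})$ is exactly the paper's decomposition $\Phi(\langle{a,b}\rangle^*) = \varphi(a \wedge b) + \sum\{f(U): a,b \in U,\ a \wedge b \notin U\}$, with the nonnegativity of the M\"{o}bius inverse playing the same role in the converse direction. You merely make explicit the step the paper leaves implicit---that a meet-closed nonempty up-set of a finite lattice is principal---and your normalization $\varphi(\hat{1})=1$ is unnecessary (the proposition lives in $M_{\infty}(\mathcal{L})$, not $C_{\infty}(\mathcal{L})$) but harmless, since the argument only needs a nonnegative weight function rather than a pmf.
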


\begin{proof}
Let $f$ be the M\"{o}bius inverse  of $\Phi$.
Then we can observe that
$$
\Phi(\langle{a,b}\rangle^*) = \varphi(a \wedge b)
 + \sum \{f(U): \mbox{$a,b \in U$ and $a \wedge b \not\in U$}\} .
$$
Hence, $\Phi$ is the M\"{o}bius extension of $\varphi$
and $f$ is supported by $\mathcal{L}_0$
if and only if it satisfies \eqref{ab-formula}.
\end{proof}

\subsection{Fr\'{e}chet bounds}
\label{lower.bounds}

Kellerer~\cite{kellerer} and
R\"{u}schendorf~\cite{ruschendorf} investigated
the optimal bounds analogous to the classical Fr\'{e}chet bounds
systematically for various marginal problems.
Let $R(\mathcal{L})$ be the space of real-valued functions on $\mathcal{L}$.
Given $\Phi\in M_{\infty}(\mathcal{L})$
we can formulate the nonnegative linear functional
$$
  \Phi(g) = \sum_{V\in\mathcal{L}} f(V) g(V),
\quad g \in R(\mathcal{L}),
$$
where $f$ is the M\"{o}bius inverse of $\Phi$.
Assuming $\varphi\in M_1(L)$,
we can define the Fr\'{e}chet bound
\begin{equation}\label{b.bound}
  B_{\varphi}(g) = \min\{\Phi(g): \Pi(\Phi) = \varphi\}
\end{equation}
for any $g \in R(\mathcal{L})$.
Duality follows from the relationship
between primal and dual problem
of linear programming,
but it is also viewed as a straightforward application
of the Hahn-Banach theorem (cf. Kellerer~\cite{kellerer}).

\begin{theorem}\label{duality.theorem}
The dual problem
\begin{equation}\label{dual.bound}
S^{\varphi}(g) = \max\left\{
  \sum_{x\in L} r_x \varphi(x):
  \sum_{x\in V} r_x \le g(V), V\in\mathcal{L}
\right\} .
\end{equation}
satisfies
$B_{\varphi}(g) = S^{\varphi}(g)$
for any $g \in R(\mathcal{L})$.
\end{theorem}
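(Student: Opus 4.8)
The plan is to read \eqref{b.bound} as a finite linear program in the weight $f$ and to recognize \eqref{dual.bound} as its linear-programming dual, so that the theorem becomes an instance of strong duality. First I would make the primal feasible set explicit. Writing $\Phi$ in the form \eqref{phi.extension}, its M\"{o}bius inverse $f$ serves as the decision variable. By Corollary~\ref{cm} applied to the finite lattice $\mathcal{L}$---whose $\preceq$-minimum is $\langle\hat{0}\rangle^* = L$, carrying the value $\Phi(\langle\hat{0}\rangle^*) = \varphi(\hat{0}) \ge 0$---we have $\Phi \in M_{\infty}(\mathcal{L})$ if and only if $f \ge 0$. Since $V \preceq \langle x\rangle^*$ is equivalent to $x \in V$, the constraint $\Pi(\Phi) = \varphi$ is the system of $|L|$ linear equalities $\sum_{V \ni x} f(V) = \varphi(x)$, $x \in L$. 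Thus $B_{\varphi}(g)$ is the minimum of $\sum_{V \in \mathcal{L}} f(V)\,g(V)$ over nonnegative $f$ satisfying $Af = \varphi$, where $A$ is the incidence matrix $A_{x,V} = \chi_V(x)$.

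Next I would verify that this program is feasible and bounded. Feasibility is precisely Proposition~\ref{surjective}: surjectivity of $\Pi$ supplies a nonnegative $f$ meeting the marginal equalities. For boundedness, note that each nonempty $V \in \mathcal{L}$ contains an element $x$, so $0 \le f(V) \le \sum_{V' \ni x} f(V') = \varphi(x) \le \varphi(\hat{1}) < \infty$; hence the feasible region is a compact polytope and the minimum is attained and finite for every $g \in R(\mathcal{L})$.

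The dual then attaches a free multiplier $r_x$ to each equality constraint and maximizes $\sum_{x\in L} r_x\,\varphi(x)$ subject to $(A^{\top}r)_V = \sum_{x\in V} r_x \le g(V)$ for all $V\in\mathcal{L}$, which is verbatim the program \eqref{dual.bound}. Weak duality $B_{\varphi}(g) \ge S^{\varphi}(g)$ is the one-line computation
$$
\sum_{V} f(V)\,g(V) \ge \sum_{V} f(V)\!\sum_{x\in V} r_x
= \sum_{x\in L} r_x\!\sum_{V \ni x} f(V) = \sum_{x\in L} r_x\,\varphi(x)
$$
valid for any primal-feasible $f$ and dual-feasible $r$. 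The reverse inequality, which is the real content, follows from the strong duality theorem for finite linear programs: because the primal is feasible with finite optimal value, the dual optimum is attained and coincides with it. (Equivalently one may separate $\varphi$ from the relevant convex cone by Hahn--Banach, as in Kellerer~\cite{kellerer}.) I expect the only delicate step to be the faithful reformulation in the first paragraph---turning ``$\Pi(\Phi)=\varphi$ with $\Phi\in M_{\infty}(\mathcal{L})$'' into ``$f \ge 0$ and $Af = \varphi$'' via Corollary~\ref{cm} and the identification $\{V : x\in V\} = \{V : V \preceq \langle x\rangle^*\}$; once this is secured, the equality $B_{\varphi}(g)=S^{\varphi}(g)$ is a direct application of linear-programming duality.
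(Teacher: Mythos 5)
Your proposal is correct, and it takes the alternative route that the paper explicitly mentions but does not carry out: linear-programming duality rather than Hahn--Banach. The paper's own proof keeps the primal abstract; it establishes weak duality by the same computation you give (phrased as $\Phi_0(r_0)\le\Phi_0(g)$ for $r_0$ of the form \eqref{r.form}), and then, noting that $S^{\varphi}$ is superadditive and agrees with a linear functional on functions of the form \eqref{r.form}, it invokes Hahn--Banach to extend this to a linear functional $\Psi$ dominating $S^{\varphi}$ with $\Psi(g)=S^{\varphi}(g)$; the nonnegativity of $\Psi$ identifies it with an element of $M_{\infty}(\mathcal{L})$ in $\Pi^{-1}(\varphi)$, which exhibits an optimal primal solution directly. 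Your version instead makes the feasible set concrete: the reduction of ``$\Phi\in M_{\infty}(\mathcal{L})$ with $\Pi(\Phi)=\varphi$'' to ``$f\ge 0$ and $\sum_{V\ni x}f(V)=\varphi(x)$'' via Corollary~\ref{cm} and the equivalence $V\preceq\langle x\rangle^*\Leftrightarrow x\in V$ is exactly right, and you supply two points the paper leaves implicit, namely feasibility (Proposition~\ref{surjective}) and attainment of the minimum on a compact polytope. What each approach buys: yours is self-contained at the level of finite LP strong duality and makes the combinatorial structure of the polytope visible; the paper's Hahn--Banach argument is the one that survives passage to infinite marginal problems in the style of Kellerer~\cite{kellerer}, and it is reused almost verbatim in the proof of Theorem~\ref{joint.duality}. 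Both arguments rest on the same weak-duality identity and on the correspondence between nonnegative Möbius inverses and $M_{\infty}(\mathcal{L})$.
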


\begin{proof}
We can introduce
a function of the form
\begin{equation}\label{r.form}
r(V) = \sum_{x\in L} r_x \chi_{\{x\in V\}},
\quad V\in\mathcal{L}
\end{equation}
so that the inequality constraints in \eqref{dual.bound}
are simply stated as $r \le g$.
Suppose that $\Phi_0\in\Pi^{-1}(\varphi)$ attains $B_{\varphi}(g)$,
and that $r_0$ of the form \eqref{r.form} satisfies $r_0 \le g$
and attains $S^{\varphi}(g)$.
Then we obtain
$S^{\varphi}(g) = \Phi_0(r_0) \le \Phi_0(g) = B_{\varphi}(g)$.
Thus, $S^{\varphi}(g)$ is a lower bound for $B_{\varphi}(g)$,
and the equality holds if $g$ is in a form of \eqref{r.form}.

Now let $g \in R(\mathcal{L})$ be fixed.
Since
$S^{\varphi}$ is sublinear, satisfying
$S^{\varphi}(g_1 + g_2) \ge S^{\varphi}(g_1) + S^{\varphi}(g_2)$,
by the Hahn-Banach theorem
we can find a linear functional $\Psi$
such that $S^{\varphi}(h) \le \Psi(h)$
for any $h \in R(\mathcal{L})$,
in which the equality holds if $h$
is in the form of \eqref{r.form} or $h = g$.
Then $\Psi$ is a nonnegative linear functional
corresponding to $\Psi \in M_{\infty}(\mathcal{L})$,
and it satisfies $\Pi(\Psi) = \varphi$.
Hence, we have shown that
$B_{\varphi}(g) \le \Psi(g) = S^{\varphi}(g)$,
which completes the proof.
\end{proof}

Let $U\in\mathcal{L}$,
and let $g_U(V) = \chi_{\{V\preceq U\}}$.
Then we have $\Phi(U) = \Phi(g_U)$,
and accordingly we simply write $B_{\varphi}(U)$
for $B_{\varphi}(g_U)$ in \eqref{b.bound}.
In the rest of this subsection
we investigate the Fr\'{e}chet bound $B_{\varphi}(U)$.

\begin{proposition}\label{cm-prop.2}
If $\varphi \in M_{\infty}(L)$ then
$B_{\varphi}(U)$ is the M\"{o}bius extension of $\varphi$.
\end{proposition}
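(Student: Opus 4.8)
The plan is to verify the pointwise identity $B_{\varphi}(U) = \Phi_0(U)$ for every $U \in \mathcal{L}$, where $\Phi_0$ denotes the M\"{o}bius extension of $\varphi$; since $\varphi \in M_{\infty}(L)$, this $\Phi_0$ exists and lies in $M_{\infty}(\mathcal{L})$ by the discussion preceding Example~\ref{cm.example} (its M\"{o}bius inverse is nonnegative by Corollary~\ref{cm} and supported on $\mathcal{L}_0$). Writing $U = \langle A\rangle^*$ for its generating antichain $A$, I would first record the explicit value $\Phi_0(U) = \varphi(\bigwedge A)$. This follows from \eqref{phi.extension}: because the M\"{o}bius inverse of $\Phi_0$ is supported on $\mathcal{L}_0$ with $f(\langle x\rangle^*) = f(x)$, only principal ideals contribute to the sum, and $\langle x\rangle^* \preceq U$ (that is, $\langle x\rangle^* \supseteq U$) holds exactly when $x$ is a lower bound of $A$, i.e.\ $x \le \bigwedge A$; summing $f(x)$ over $\{x \le \bigwedge A\}$ returns $\varphi(\bigwedge A)$ via \eqref{cdf}. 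This extends the pairwise formula \eqref{ab-formula} of Proposition~\ref{cm-prop}, and taking $A = \{x\}$ recovers $\Phi_0(\langle x\rangle^*) = \varphi(x)$, so $\Pi(\Phi_0) = \varphi$.

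For the upper bound I would simply invoke feasibility of $\Phi_0$ in the minimization \eqref{b.bound}: since $\Phi_0 \in M_{\infty}(\mathcal{L})$ and $\Pi(\Phi_0) = \varphi$, it is an admissible competitor, whence $B_{\varphi}(U) \le \Phi_0(U) = \varphi(\bigwedge A)$.

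For the matching lower bound, let $\Phi \in M_{\infty}(\mathcal{L})$ be an arbitrary extension with $\Pi(\Phi) = \varphi$. The key observation is the containment $U \subseteq \langle \bigwedge A\rangle^*$, which holds because each $y \in U$ satisfies $y \ge a \ge \bigwedge A$ for some $a \in A$; under the reverse-inclusion order this reads $\langle \bigwedge A\rangle^* \preceq U$. Monotonicity of $\Phi$ then gives $\Phi(U) \ge \Phi(\langle \bigwedge A\rangle^*) = \varphi(\bigwedge A)$, the last equality being the marginal condition \eqref{marginal.phi}. Minimizing over all such $\Phi$ yields $B_{\varphi}(U) \ge \varphi(\bigwedge A)$, and combining with the upper bound gives $B_{\varphi}(U) = \varphi(\bigwedge A) = \Phi_0(U)$ for every $U$, i.e.\ $B_{\varphi}$ is the M\"{o}bius extension of $\varphi$.

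The only genuinely delicate point is the order bookkeeping around reverse inclusion: one must keep straight that the set containment $U \subseteq \langle \bigwedge A\rangle^*$ translates into $\langle \bigwedge A\rangle^* \preceq U$, so that the monotonicity of $\Phi$ pushes its value \emph{down} to $\varphi(\bigwedge A)$ rather than up. Everything else is bookkeeping on the support of the M\"{o}bius inverse together with the feasibility of $\Phi_0$. Notably, this argument does not require the duality Theorem~\ref{duality.theorem}, although an alternative route is available by solving the dual program \eqref{dual.bound} directly.
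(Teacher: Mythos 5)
Your proof is correct and takes essentially the same route as the paper's: the lower bound $B_{\varphi}(U) \ge \varphi(\bigwedge A)$ comes from monotonicity of any feasible extension together with the marginal condition at $\langle \bigwedge A\rangle^*$, and is matched from above because the M\"{o}bius extension is itself feasible with value $\varphi(\bigwedge A)$ at $U = \langle A\rangle^*$. You merely spell out the support computation $\Phi_0(\langle A\rangle^*) = \varphi(\bigwedge A)$ and the reverse-inclusion bookkeeping that the paper leaves implicit.
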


\begin{proof}
For each $U\in\mathcal{L}$,
we can express $U = \langle{A}\rangle^*$ with antichain $A$,
and observe that
$\varphi(\textstyle\bigwedge\! A)
\le B_{\varphi}(\langle{A}\rangle^*)$.
Let $\Phi$ be the M\"{o}bius extension of $\varphi$.
Then we can find
$\Phi(\langle{A}\rangle^*)
= \varphi(\textstyle\bigwedge\! A)$,
and therefore,
$\Phi(\langle{A}\rangle^*)
= B_{\varphi}(\langle{A}\rangle^*)$.
\end{proof}

\begin{example}
In general, the Fr\'{e}chet bound $B_{\varphi}(U)$
may not be a completely monotone extension of $\varphi$.
Continuing from Example~\ref{cap.example},
we can find that
$$
B_{\varphi_{2/3}}(U) = \begin{cases}
  1 & \text{ if $U = \langle{123}\rangle^*$; }\\
  2/3 & \text{ if $U = \langle{12}\rangle^*$, $\langle{13}\rangle^*$,
                      or $\langle{23}\rangle^*$; } \\
  1/3 & \text{ if $U = \langle{12,13}\rangle^*$,
                      $\langle{12,23}\rangle^*$, or $\langle{13,23}\rangle^*$;} \\
  0     & \text{ otherwise, }
\end{cases}
$$
is a completely monotone extension of $\varphi_{2/3}$
even though $\varphi_{2/3} \not\in M_{\infty}(L)$.
Whereas,
$$
B_{\varphi_{1/2}}(U) = \begin{cases}
1     & \mbox{ if $U = \langle{123}\rangle^*$; } \\
1/2 & \mbox{ if $U = \langle{12}\rangle^*, \langle{13}\rangle^*,$
                       or $\langle{23}\rangle^*$; } \\
0     & \mbox{ otherwise, }
\end{cases}
$$
is not completely monotone.
\end{example}

By $\mathcal{T}$ we denote the class of connected acyclic graphs (i.e., trees) with
vertex set on $L$.
The vertex set of a tree $G$ is also denoted by $G$,
and the edge set $E(G)$ is a collection of pairs $\{a,b\}$ in $G$.
Then we can associate a tree $G$ with $\varphi$
by setting
$$
  \varphi(G) = \sum_{a\in G} \varphi(a)
  - \sum_{\{a,b\}\in E(G)} \varphi(a\vee b).
$$

Let $a \in G$ be fixed.
Then we can introduce the unique rooted tree on $G$ as follows:
For $x,y \in G$,
$x$ is a descendant of $y$
(and $y$ is an ancestor of $x$)
if the path from $x$ to $a$ in $G$ contains the path from $y$ to $a$,
and $a$ becomes the root of the tree.
The rooted tree is a directed graph (digraph)
in which the ordered pair $(x,y)$
represents the edge with $y$ being the parent of $x$ (i.e., the immediate ancestor of $x$).
By $E(G;a)$ we denote the edge set of the rooted tree with the root $a$.
By defining
$$
  \varphi(G;a) = 
  \sum_{(x,y)\in E(G;a)} [\varphi(x\vee y) - \varphi(x)],
$$
we can formulate $\varphi(G)$ equivalently by
\begin{equation}\label{varphi.G}
  \varphi(G) = \varphi(a) - \varphi(G;a).
\end{equation}
Observe that $\varphi(G;a) \ge 0$,
and therefore, that $\varphi(G) \le \varphi(a)$.
Moreover, we can obtain the following result
as an immediate application of Theorem~\ref{duality.theorem}.

\begin{lemma}\label{ub.lemma}
$\varphi(G) \le B_{\varphi}(\langle{G}\rangle^*)$
for any $G \in\mathcal{T}$.
\end{lemma}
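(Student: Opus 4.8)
The plan is to evaluate $B_{\varphi}(\langle{G}\rangle^*)$ through the dual problem of Theorem~\ref{duality.theorem}. Setting $U = \langle{G}\rangle^*$, we have $B_{\varphi}(\langle{G}\rangle^*) = S^{\varphi}(g_U)$ with $g_U(V) = \chi_{\{V\preceq U\}}$, i.e. $g_U(V) = 1$ exactly when $V \supseteq U$. Since $S^{\varphi}(g_U)$ is a \emph{maximum} over coefficient vectors $(r_x)$ feasible for the constraints in \eqref{dual.bound}, it suffices to exhibit a single feasible $(r_x)$ whose objective $\sum_{x} r_x \varphi(x)$ equals $\varphi(G)$. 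Reading the definition of $\varphi(G)$ directly, I would take
$$
r_x = \chi_{\{x\in G\}} - \left|\{\{a,b\}\in E(G): a\vee b = x\}\right|, \quad x\in L ,
$$
so that $\sum_{x\in L} r_x \varphi(x) = \sum_{a\in G}\varphi(a) - \sum_{\{a,b\}\in E(G)}\varphi(a\vee b) = \varphi(G)$ by construction.

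All the remaining work lies in verifying feasibility, namely $\sum_{x\in V} r_x \le g_U(V)$ for every $V\in\mathcal{L}$. Writing $p = |G\cap V|$ for the number of vertices of $G$ in $V$ and $q$ for the number of edges $\{a,b\}\in E(G)$ with $a\vee b\in V$, the left-hand side is exactly $p-q$. The structural fact I would use repeatedly is that a dual order ideal $V$ is closed under $\vee$: if one endpoint of an edge lies in $V$, then so does its join, since $a\vee b\ge a$. In the case $V\supseteq U$ (so $g_U(V)=1$): as $U$ contains every vertex of $G$, all vertices lie in $V$, whence $p=|G|$; and then every edge has its join in $V$, so $q=|E(G)|=|G|-1$, giving $p-q=1$.

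For the case $V\not\supseteq U$ (so $g_U(V)=0$): because $V$ is an up-set, if it contained every vertex it would contain all of $U$, so $S := G\cap V$ must be a proper subset of the vertex set. Splitting the edges counted by $q$, I would bound $q \ge |E_S| + |E_{\partial}|$, where $E_S$ and $E_\partial$ denote the edges with both, respectively exactly one, endpoint in $S$ (both kinds have their join in $V$). The forest identity gives $|E_S| = |S|-k$, with $k$ the number of connected components of the induced forest $G[S]$, and hence $p-q \le k - |E_\partial|$.

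The main obstacle, and the only genuinely combinatorial point, is the inequality $|E_\partial|\ge k$. I expect to argue it from the connectedness of the tree $G$: each component $C$ of $G[S]$ satisfies $C \subsetneq G$ (since $S \subsetneq G$), so connectedness forces at least one edge of $G$ to leave $C$; such an edge cannot land in another component of $G[S]$ (that would merge the two), and so must go to $L\setminus S$, contributing to $E_\partial$. As distinct components supply distinct such edges, $|E_\partial|\ge k$, giving $p-q\le 0 = g_U(V)$. Combining the two cases shows $(r_x)$ is feasible, and therefore $\varphi(G) = \sum_x r_x\varphi(x) \le S^{\varphi}(g_U) = B_{\varphi}(\langle{G}\rangle^*)$.
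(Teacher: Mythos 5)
Your proposal is correct and follows essentially the same route as the paper: it feeds the same dual feasible point $r(V)=\sum_{a\in G}\chi_{\{a\in V\}}-\sum_{\{a,b\}\in E(G)}\chi_{\{a\vee b\in V\}}$ into Theorem~\ref{duality.theorem} and verifies $r\le g_U$ by the same two-case analysis. The only cosmetic difference is in the second case, where you count components of $G[S]$ inside $V$ together with boundary edges, while the paper counts components of the restricted subgraph $G|_{D}$ on the complementary down-set $D=L\setminus V$ to get $r(V)=1-k\le 0$; these are the same Euler-formula argument seen from opposite sides of the partition.
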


In the proof of Lemma~\ref{ub.lemma}
it is convenient to define a graph restricted on a down-set:
For a tree $G\in\mathcal{T}$
and a down-set $D$,
we will define the subgraph $G|_{D}$
by setting $G|_{D} := G\cap D$
and $E(G|_{D}) := \{\{a,b\}\in E(G): a\vee b \in D\}$.

\begin{proof}[Proof of Lemma~\ref{ub.lemma}]
Let
$g(V) = \chi_{\{V\preceq\langle{G}\rangle^*\}}$ and
$$
r(V) =\sum_{a\in G} \chi_{\{a\in V\}}
- \sum_{\{a,b\}\in E(G)} \chi_{\{a\vee b\in V\}}
$$
for $V \in \mathcal{L}$.
Note that $r$ is in the form of \eqref{r.form}.
Since $|G| = |E(G)| + 1$,
we can observe that
$r(V) = g(V) = 1$ if $V\preceq\langle{G}\rangle^*$.
Suppose that $V\not\preceq\langle{G}\rangle^*$.
Then the down-set $D = L\setminus V$ contains at least one vertex of $G$.
If the subgraph $G|_D$ has $k$ connected components,
we can find that $r(V) = 1 - k \le 0$.
Thus, we obtain $r \le g$, and therefore, $\varphi(G) \le S^{\varphi}(g)$.
The proof is complete by Theorem~\ref{duality.theorem}.
\end{proof}

In what follows we say ``a path $H$ from $a$ to $b$,''
or simply write $H = (a,\ldots,b)$
when $H\in\mathcal{T}$ and $a$ and $b$ are the only leaves in $H$
(i.e., the two opposite ends of the path).
By Lemma~\ref{ub.lemma}
we have
$\varphi(H) \le B_{\varphi}(\langle{H}\rangle^*)
\le B_{\varphi}(\langle{a,b}\rangle^*)$ if $H = (a,\ldots,b)$.
In Proposition~\ref{attain.ub} we shall see that
\begin{equation}\label{lambda}
  \lambda(\varphi;a,b)
  := \max\{\varphi(H): \mbox{$H$ is a path from $a$ to $b$}\}
\end{equation}
is optimal.
It is easy to observe that
$\lambda(\varphi;a,b) \ge \varphi(a \wedge b)$;
in particular, $\lambda(\varphi;a,b) \ge 0$
if $\varphi \ge 0$.
Furthermore,
we can view $\lambda(\varphi;a,x)$
as a function of $x$,
and obtain the monotonicity property.

\begin{lemma}\label{lambda.monotone}
If $\varphi\in M_1(L)$ then
so does $\lambda(\varphi;a,\cdot)$.
\end{lemma}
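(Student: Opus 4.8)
The plan is to establish the two defining properties of membership in $M_1(L)$—nonnegativity and monotonicity—for the function $x\mapsto\lambda(\varphi;a,x)$. Nonnegativity requires no new work: since $\varphi\in M_1(L)$ is in particular nonnegative, the inequality $\lambda(\varphi;a,x)\ge\varphi(a\wedge x)\ge 0$ noted just before the statement gives it immediately. The content of the lemma is therefore the monotonicity claim: if $b\le b'$ then $\lambda(\varphi;a,b)\le\lambda(\varphi;a,b')$. To prove it I would fix such a pair and choose a path $H=(a,\ldots,b)$ attaining the maximum in \eqref{lambda}, so that $\varphi(H)=\lambda(\varphi;a,b)$ (the maximum is attained since $L$ is finite). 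It then suffices to exhibit a single path from $a$ to $b'$ whose value under $\varphi(\cdot)$ is at least $\varphi(H)$, because $\lambda(\varphi;a,b')$ is the maximum over all such paths; this yields $\lambda(\varphi;a,b')\ge\varphi(H)=\lambda(\varphi;a,b)$. I would split into two cases according to whether $b'$ is already a vertex of $H$.

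If $b'\notin H$, I would form $H'$ by adjoining $b'$ to $H$ together with the single new edge $\{b,b'\}$. Since $b$ was a leaf of $H$ and becomes interior while $b'$ is a new leaf, $H'$ is a path from $a$ to $b'$. Using $b\vee b'=b'$ (as $b\le b'$), the defining sum for $\varphi(H')$ differs from that for $\varphi(H)$ only by the added vertex term $\varphi(b')$ and the added edge term $\varphi(b\vee b')=\varphi(b')$, which cancel; hence $\varphi(H')=\varphi(H)$ and this case is settled.

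If instead $b'\in H$, I would take $H''$ to be the sub-path of $H$ running from $a$ to $b'$, which is again a path from $a$ to $b'$. Rooting both $H$ and $H''$ at $a$ and invoking the representation $\varphi(G)=\varphi(a)-\varphi(G;a)$ from \eqref{varphi.G}, I observe that $E(H'';a)\subseteq E(H;a)$ and that every summand $\varphi(x\vee y)-\varphi(x)$ appearing in $\varphi(G;a)$ is nonnegative by the monotonicity of $\varphi$. Therefore $\varphi(H'';a)\le\varphi(H;a)$, whence $\varphi(H'')\ge\varphi(H)$, completing the monotonicity argument.

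The main obstacle is precisely this second case: one cannot always extend the optimal path toward $b'$, since the naive extension would repeat the vertex $b'$ and cease to be a path. The resolution is to recognize that restricting to the prefix up to $b'$ can only raise the value, and this is exactly where the rooted decomposition \eqref{varphi.G} and the nonnegativity of its edge contributions do the work. Everything else is direct bookkeeping of the vertex and edge sums defining $\varphi(\cdot)$.
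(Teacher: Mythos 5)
Your proof is correct and follows essentially the same route as the paper: take an optimal path $H=(a,\ldots,b)$, append the edge $\{b,b'\}$, and observe that the new vertex term $\varphi(b')$ and edge term $\varphi(b\vee b')$ cancel. The paper dismisses the case $b'\in H$ with a ``without loss of generality''; your explicit truncation argument via \eqref{varphi.G} supplies the missing justification for that step, so your write-up is if anything the more complete one.
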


\begin{proof}
Let $H_1 = (a,\ldots,x)$ be a path satisfying
$\varphi(H_1) = \lambda(\varphi;a,x)$,
and let $x < y$.
Without loss of generality
we can assume that $y \not\in H_1$.
Then we can add the edge $\{x,y\}$ to $H_1$,
and obtain the path $\tilde{H}_1 = (a,\ldots,x,y)$.
Since
$\varphi(H_1) = \varphi(\tilde{H}_1) \le \lambda(\varphi;a,y)$,
we have shown that $\lambda(\varphi;a,\cdot)$ is monotone.
\end{proof}

For any $a\in L$
we can introduce the \emph{$\lambda$-difference operator}
$\Lambda_a$ by
\begin{equation}\label{Lambda}
\Lambda_a \varphi(x)
= \varphi(x) - \lambda(\varphi;a,x),
\quad x \in L.
\end{equation}
By \eqref{varphi.G} and \eqref{lambda}
we can easily see that \eqref{Lambda} is expressed equivalently by
\begin{equation}\label{Lambda.H}
\Lambda_a\varphi(x)
= \min\{\varphi(H;x):  \mbox{$H$ is a path from $a$ to $x$}\}.
\end{equation}
Clearly $\Lambda_a\varphi \ge 0$ if $\varphi$ is monotone,
and it also possesses the monotonicity property.

\begin{lemma}\label{Lambda.monotone}
If $\varphi\in M_1(L)$ then
so does $\Lambda_a\varphi$.
\end{lemma}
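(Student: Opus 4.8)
The plan is to prove the two defining properties of membership in $M_1(L)$ separately. Nonnegativity is immediate and is already recorded before the statement: each summand $\varphi(u\vee v)-\varphi(u)$ in the definition of $\varphi(H;x)$ is nonnegative by the monotonicity of $\varphi$, so the minimum in \eqref{Lambda.H} is nonnegative. Hence the real content is monotonicity, and for this I would work throughout with the path characterization \eqref{Lambda.H} rather than the difference form \eqref{Lambda}; the latter is a difference of two monotone functions (cf.\ Lemma~\ref{lambda.monotone}) and is not manifestly monotone, whereas the minimum form is well suited to a direct comparison.

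So fix $x\le y$; the case $x=y$ being trivial, assume $x<y$. I would choose a path $H=(a,\dots,y)$ with vertices $a=v_0,v_1,\dots,v_m=y$ attaining $\Lambda_a\varphi(y)=\varphi(H;y)$, for which rooting at $y$ gives
$$
\varphi(H;y)=\sum_{i=0}^{m-1}\bigl[\varphi(v_i\vee v_{i+1})-\varphi(v_i)\bigr].
$$
From $H$ I would manufacture a path $H'$ from $a$ to $x$ whose cost is no larger, and then invoke minimality in \eqref{Lambda.H} to conclude $\Lambda_a\varphi(x)\le\varphi(H';x)\le\varphi(H;y)=\Lambda_a\varphi(y)$.

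The construction splits according to whether $x$ already lies on $H$. If $x\notin\{v_0,\dots,v_{m-1}\}$, I replace the last vertex $y$ by $x$ to form $H'=(a,\dots,v_{m-1},x)$; rooting at $x$, the two costs differ only in the final edge, and
$$
\varphi(H';x)-\varphi(H;y)=\varphi(v_{m-1}\vee x)-\varphi(v_{m-1}\vee y)\le 0,
$$
since $v_{m-1}\vee x\le v_{m-1}\vee y$ and $\varphi$ is monotone. If instead $x=v_j$ for some $j\le m-1$, I truncate $H$ to the subpath $H'=(a,\dots,v_j)$, whence $\varphi(H;y)-\varphi(H';x)=\sum_{i=j}^{m-1}[\varphi(v_i\vee v_{i+1})-\varphi(v_i)]\ge 0$, again by monotonicity. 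In either case $\varphi(H';x)\le\varphi(H;y)$, which is exactly what is needed.

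The only delicate point is bookkeeping: one must check that $H'$ is a genuine path from $a$ to $x$, that is, a tree whose only leaves are $a$ and $x$, which is precisely why the case $x\in H$ is separated out and handled by truncation rather than substitution, and one must be careful to root each path at its endpoint so that the formula for $\varphi(\,\cdot\,;\,\cdot\,)$ applies. I do not expect a substantive obstacle beyond this case analysis, since in both cases the comparison reduces to a single application of the monotonicity of $\varphi$ together with the hypothesis $x\le y$.
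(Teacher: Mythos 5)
Your proof is correct and follows essentially the same route as the paper's: both work from the path characterization \eqref{Lambda.H}, take a path $H=(a,\ldots,y)$ attaining $\Lambda_a\varphi(y)$, and split on whether $x$ lies on $H$, handling the case $x\in H$ by truncation. The only difference is in the other case: the paper appends the edge $\{y,x\}$ to form $(a,\ldots,y,x)$, which leaves the cost unchanged since $\varphi(y\vee x)-\varphi(y)=0$ when $x<y$, whereas you replace the endpoint $y$ by $x$, which weakly decreases the cost; your variant is equally valid but tacitly assumes $m\ge 1$ (i.e.\ $a\neq y$), since in the degenerate case where $H$ is the single vertex $a=y$ there is no $v_{m-1}$ to which $x$ can be attached --- a corner case disposed of immediately by the paper's append trick or by the two-vertex path $(a,x)$.
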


\begin{proof}
By \eqref{Lambda.H}
we can find a path $H_2 = (a,\ldots,y)$
such that $\varphi(H_2;y) = \Lambda_a\varphi(y)$.
Let $x < y$.
If $x\in H_2$ then we can construct
the path $\tilde{H}_2 = (a,\ldots,x)$
by deleting all the edges from $x$ to $y$ in $H_2$,
and obtain $\varphi(H_2;y) \ge \varphi(\tilde{H}_2;x)$.
Otherwise, we can add the edge $\{y,x\}$ to $H_2$,
and the resulting path $\tilde{H}_2 = (a,\ldots,y,x)$
satisfies $\varphi(H_2;y) = \varphi(\tilde{H}_2;x)$.
In either case we can show that
$\varphi(H_2;y) \ge \varphi(\tilde{H}_2;x) \ge \Lambda_a\varphi(x)$.
Therefore, $\Lambda_a\varphi$ is monotone.
\end{proof}

Now we can prove the optimality of \eqref{lambda}.

\begin{proposition}\label{attain.ub}
$\lambda(\varphi;a,b) = B_{\varphi}(\langle{a,b}\rangle^*)$
for every pair $\{a,b\}$ of $L$.
\end{proposition}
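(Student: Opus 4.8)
The plan is to prove the two inequalities $\lambda(\varphi;a,b)\le B_{\varphi}(\langle{a,b}\rangle^*)$ and $B_{\varphi}(\langle{a,b}\rangle^*)\le\lambda(\varphi;a,b)$ separately. The first is almost immediate. Since every $\Phi\in M_{\infty}(\mathcal{L})$ is monotone, the pointwise minimum $B_{\varphi}$ defined in \eqref{b.bound} is monotone on $(\mathcal{L},\preceq)$. For any path $H=(a,\ldots,b)$ the vertex set of $H$ contains $\{a,b\}$, hence $\langle{H}\rangle^*\supseteq\langle{a,b}\rangle^*$, i.e.\ $\langle{H}\rangle^*\preceq\langle{a,b}\rangle^*$. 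Thus Lemma~\ref{ub.lemma} together with monotonicity gives $\varphi(H)\le B_{\varphi}(\langle{H}\rangle^*)\le B_{\varphi}(\langle{a,b}\rangle^*)$, and taking the maximum over all paths $H$ yields $\lambda(\varphi;a,b)\le B_{\varphi}(\langle{a,b}\rangle^*)$.

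For the reverse inequality I would exhibit a single feasible $\Phi_0\in\Pi^{-1}(\varphi)$ with $\Phi_0(\langle{a,b}\rangle^*)=\lambda(\varphi;a,b)$; because $B_{\varphi}$ is the minimum over all feasible $\Phi$, this forces $B_{\varphi}(\langle{a,b}\rangle^*)\le\lambda(\varphi;a,b)$. The device is the splitting $\varphi=\lambda(\varphi;a,\cdot)+\Lambda_a\varphi$ coming from \eqref{Lambda}. I would take $\mathcal{X}$ to be a mixture of two random dual order ideals: with probability $\varphi(a)$ use a random up-set $\mathcal{X}_1$ realizing the normalized capacity $\lambda(\varphi;a,\cdot)/\varphi(a)$, and with probability $1-\varphi(a)$ use a random up-set $\mathcal{X}_2$ realizing $\Lambda_a\varphi/(1-\varphi(a))$, each produced by the greedy method of Proposition~\ref{surjective}. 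By Lemmas~\ref{lambda.monotone} and~\ref{Lambda.monotone} both $\lambda(\varphi;a,\cdot)$ and $\Lambda_a\varphi$ are monotone and vanish at $\hat{0}$, while $\lambda(\varphi;a,\hat{1})=\varphi(a)$ and $\Lambda_a\varphi(\hat{1})=1-\varphi(a)$, so the two normalized functions are genuine capacities and the weights are consistent. By additivity of the mixture, $\mathbb{P}(x\in\mathcal{X})=\lambda(\varphi;a,x)+\Lambda_a\varphi(x)=\varphi(x)$, so $\Pi(\Phi_0)=\varphi$.

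The crux is to check that this mixture places exactly mass $\lambda(\varphi;a,b)$ on $\{a,b\in\mathcal{X}\}$, and this is where I expect the real work to lie. The point is that the greedy up-sets respect membership of $a$. First, $\lambda(\varphi;a,x)=\varphi(a)$ for every $x\ge a$: the direct path $(a,x)$ already attains $\varphi(a)$ since $a\vee x=x$, and $\varphi(H)\le\varphi(a)$ for every path $H$ by \eqref{varphi.G}. Hence the greedy up-sets $U_1(t)=\{x:\lambda(\varphi;a,x)>t\,\varphi(a)\}$ all contain $\langle{a}\rangle^*$, so $a\in\mathcal{X}_1$ almost surely and $\mathbb{P}(a,b\in\mathcal{X}_1)=\mathbb{P}(b\in\mathcal{X}_1)=\lambda(\varphi;a,b)/\varphi(a)$. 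Second, $\Lambda_a\varphi(a)=0$, so the greedy up-sets for $\mathcal{X}_2$ never contain $a$ and this component contributes nothing to $\{a\in\mathcal{X}\}$. Combining the two, $\Phi_0(\langle{a,b}\rangle^*)=\mathbb{P}(a,b\in\mathcal{X})=\varphi(a)\cdot\lambda(\varphi;a,b)/\varphi(a)=\lambda(\varphi;a,b)$.

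Finally I would dispose of the degenerate weights. If $\varphi(a)=0$ then $a\notin\mathcal{X}$ almost surely for every feasible $\Phi$, so $B_{\varphi}(\langle{a,b}\rangle^*)=0$, while $0\le\lambda(\varphi;a,b)\le\varphi(a)=0$ forces equality; the case $\varphi(a)=1$ is symmetric, with $\Lambda_a\varphi\equiv0$ and $\lambda(\varphi;a,\cdot)=\varphi$. The main obstacle is thus the bookkeeping of the mixture: arranging that Proposition~\ref{surjective} can be applied componentwise so that one component always contains $a$ and the other always omits it, which is exactly where the identities $\lambda(\varphi;a,x)=\varphi(a)$ for $x\ge a$ and $\Lambda_a\varphi(a)=0$ are decisive.
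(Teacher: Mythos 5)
Your proof is correct and follows essentially the same route as the paper: the lower bound $\lambda(\varphi;a,b)\le B_{\varphi}(\langle{a,b}\rangle^*)$ via Lemma~\ref{ub.lemma}, and for the upper bound the decomposition $\varphi=\lambda(\varphi;a,\cdot)+\Lambda_a\varphi$ with a completely monotone extension of each summand added together. The only difference is that the paper takes \emph{arbitrary} extensions $\Phi_1,\Phi_2$ of the two summands and concludes directly from monotonicity that $\Phi_2(\langle{a,b}\rangle^*)\le\Phi_2(\langle{a}\rangle^*)=\Lambda_a\varphi(a)=0$ and $\Phi(\langle{a,b}\rangle^*)\le\Phi_1(\langle{b}\rangle^*)=\lambda(\varphi;a,b)$, so your detailed bookkeeping of the greedy up-sets (and the separate treatment of the cases $\varphi(a)=0$ and $\varphi(a)=1$) is sound but not needed.
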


\begin{proof}
For a fixed $a \in L$,
we can decompose
$\varphi(\cdot)
= \lambda(\varphi;a,\cdot) + \Lambda_a\varphi(\cdot)$,
in which
$\lambda(\varphi;a,\cdot), \Lambda_a\varphi(\cdot) \in M_1(L)$
by Lemma~\ref{lambda.monotone} and~\ref{Lambda.monotone}.
Thus, we can find
completely monotone extensions $\Phi_1$ and $\Phi_2$
of $\lambda(\varphi;a,\cdot)$
and $\Lambda_a\varphi(\cdot)$ respectively,
and construct $\Phi = \Phi_1 + \Phi_2$
so that $\Pi(\Phi) = \varphi$.
Observe that
$$
\Phi_2(\langle{a,x}\rangle^*) \le \Phi_2(\langle{a}\rangle^*)
= \Lambda_a\varphi(a) = 0.
$$
and therefore, that
$$
  \Phi(\langle{a,x}\rangle^*) = \Phi_1(\langle{a,x}\rangle^*)
  \le \Phi_1(\langle{x}\rangle^*) = \lambda(\varphi;a,x).
$$
Since $\lambda(\varphi;a,x) \le B_{\varphi}(\langle{a,x}\rangle^*)$
by Lemma~\ref{ub.lemma},
$\lambda(\varphi;a,x)$ attains $B_{\varphi}(\langle{a,x}\rangle^*)$.
\end{proof}

\subsection{Successive $\lambda$-difference operators}
\label{lambda.operators}

Given a sequence $a_1,a_2,\ldots$ from $L$,
we can define the \emph{successive $\lambda$-difference operator}
recursively by
\begin{equation}\label{Lambda.suc}
\Lambda_{a_1,\ldots,a_n}\varphi
= \Lambda_{a_n}(\Lambda_{a_1,\ldots,a_{n-1}}\varphi),
\quad n = 2,3,\ldots .
\end{equation}
The operator \eqref{Lambda} maps from $M_1(L)$ to itself,
and so does the operator \eqref{Lambda.suc}.
Unlike the operator \eqref{succ.diff},
the definition of \eqref{Lambda.suc} depends on the order of $a_i$'s,
as illustrated in the following example.

\begin{example}\label{lattice.4}
Let $L = \{\emptyset,1,2,3,4,
12,13,14,23,24,34,
123,124,134,234,
1234\}$
be a four-element Boolean lattice,
and let
\begin{equation}\label{varphi.4}
\varphi(x) = \begin{cases}
1                  & \mbox{ if $x = 1234$; } \\
1/2 & \mbox{ if $x = 123,124$ or $234$; } \\
1/3 & \mbox{ if $x = 134, 13$ or $23$; } \\
1/6 & \mbox{ if $x = 12$ or $34$; } \\
0                  & \mbox{ otherwise. }
\end{cases}
\end{equation}
Then we have
$\Lambda_{12,34}\varphi(234) = \frac{1}{3}$
and
$\Lambda_{34,12}\varphi(234) = \frac{1}{6}$.
If $x \neq 234$ then we obtain
$$
\Lambda_{12,34}\varphi(x) = \Lambda_{34,12}\varphi(x)
= \begin{cases}
2/3 & \mbox{ if $x = 1234$; } \\
1/3 & \mbox{ if $x = 124$; } \\
1/6 & \mbox{ if $x = 13, 23, 123$ or $134$; } \\
0                  & \mbox{ otherwise. }
\end{cases}
$$
\end{example}

We call a path $(a_1,\ldots,a_n)$ \emph{monotone}
if $i < j$ whenever $a_i < a_j$.
As the following lemma suggests,
we only need to consider a monotone path $(a_1,\ldots,a_n)$
for the operator $\Lambda_{a_1,\ldots,a_n}$.

\begin{lemma}\label{Lambda.contract}
If $a_n \le a_i$ for some $i \le n-1$
then
$\Lambda_{a_1,\ldots,a_{n}}\varphi
= \Lambda_{a_1,\ldots,a_{n-1}}\varphi$
for every $\varphi\in M_1(L)$.
\end{lemma}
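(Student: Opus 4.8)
The plan is to reduce the whole claim to the single identity $\psi(a_n)=0$, where I abbreviate $\psi:=\Lambda_{a_1,\ldots,a_{n-1}}\varphi$. Since the successive operator \eqref{Lambda.suc} maps $M_1(L)$ into itself, $\psi\in M_1(L)$, and so $\lambda(\psi;a_n,x)\ge 0$ for every $x$ by the lower bound recorded after \eqref{lambda}. On the other hand, for any path $H=(a_n,\ldots,x)$ I can take $a_n$ as the root and use \eqref{varphi.G} to write $\psi(H)=\psi(a_n)-\psi(H;a_n)\le\psi(a_n)$, whence $\lambda(\psi;a_n,x)\le\psi(a_n)$. Thus once $\psi(a_n)=0$ is established, I obtain $\lambda(\psi;a_n,x)=0$ for all $x$, and therefore $\Lambda_{a_n}\psi(x)=\psi(x)-\lambda(\psi;a_n,x)=\psi(x)$, which is exactly $\Lambda_{a_1,\ldots,a_n}\varphi=\Lambda_{a_1,\ldots,a_{n-1}}\varphi$.

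To obtain $\psi(a_n)=0$ I would first prove $\psi(a_i)=0$ for the index $i\le n-1$ with $a_n\le a_i$, and then invoke monotonicity: since $\psi\in M_1(L)$ and $a_n\le a_i$, one has $0\le\psi(a_n)\le\psi(a_i)=0$.

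The heart of the argument, and the step I expect to be the main obstacle, is showing $\psi(a_i)=0$, that is, that the operator $\Lambda_{a_i}$ drives the value at $a_i$ to zero and that the later operators $\Lambda_{a_{i+1}},\ldots,\Lambda_{a_{n-1}}$ cannot revive it. Writing $\psi_j:=\Lambda_{a_1,\ldots,a_j}\varphi$, the base case $\psi_i(a_i)=0$ follows because the only path from $a_i$ to itself is the trivial one-vertex path, so $\lambda(\psi_{i-1};a_i,a_i)=\psi_{i-1}(a_i)$ and hence $\Lambda_{a_i}\psi_{i-1}(a_i)=\psi_{i-1}(a_i)-\lambda(\psi_{i-1};a_i,a_i)=0$. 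For the inductive step I would use the two-sided bound $0\le\Lambda_{a}\psi'\le\psi'$ valid for every $\psi'\in M_1(L)$: nonnegativity holds because $\Lambda_{a}$ preserves $M_1(L)$, while $\Lambda_{a}\psi'\le\psi'$ holds because $\lambda(\psi';a,\cdot)\ge 0$. Applying this with $a=a_j$ gives $0\le\psi_j(a_i)\le\psi_{j-1}(a_i)$, so that $\psi_{j-1}(a_i)=0$ forces $\psi_j(a_i)=0$ for each $j$ from $i+1$ to $n-1$. Taking $j=n-1$ then yields $\psi(a_i)=\psi_{n-1}(a_i)=0$, which closes the chain.

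The one point I would verify with care is the base-case claim that the trivial path is the only path from $a_i$ to $a_i$; this gives $\lambda(\psi';a,a)=\psi'(a)$ exactly, consistent with combining the lower bound $\lambda(\psi';a,a)\ge\psi'(a\wedge a)=\psi'(a)$ with the root bound $\lambda(\psi';a,a)\le\psi'(a)$. Everything else is the bookkeeping sandwich $0\le\Lambda_{a}\psi'\le\psi'$ propagating the single vanishing value through the remaining operators, so no essential difficulty arises beyond organizing the induction.
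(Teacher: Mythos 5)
Your proposal is correct and follows essentially the same route as the paper: both arguments reduce the claim to showing $\Lambda_{a_1,\ldots,a_{n-1}}\varphi(a_n)=0$ (via the facts that each $\Lambda_{a_j}$ preserves $M_1(L)$ and can only decrease values, together with $\Lambda_{a_1,\ldots,a_i}\varphi(a_i)=0$ and monotonicity), and then conclude that $\lambda(\varphi_{n-1};a_n,\cdot)\equiv 0$. The only cosmetic difference is that the paper exhibits the explicit minimizing path $H_0=(a_n,a_n\wedge x,x)$ in \eqref{Lambda.H}, whereas you sandwich $\lambda(\varphi_{n-1};a_n,x)$ between $0$ and $\varphi_{n-1}(a_n)=0$; both are valid.
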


\begin{proof}
Let $\varphi_{n-1} = \Lambda_{a_1,\ldots,a_{n-1}}\varphi$.
Since $a_n \le a_i$,
$\varphi_{n-1}(a_n) \le \Lambda_{a_1,\ldots,a_{i}}\varphi(a_n) = 0$.
Thus, we can find that
the path $H_0 = (a_n,a_n\wedge x, x)$ attains the minimum
$\Lambda_{a_{n}}\varphi_{n-1}(x) = \varphi_{n-1}(x)$.
\end{proof}

Here we set $\varphi_0 = \varphi \in M_1(L)$
and $\varphi_i = \Lambda_{a_i}\varphi_{i-1}$
recursively for $i=1,\ldots,n$.
Then we can express $\varphi_k$ by
\begin{equation}\label{Lambda.expansion}
\varphi_k(\cdot) =
\sum_{i=k}^{n-1} \lambda(\varphi_i; a_{i+1},\cdot)
+ \varphi_n(\cdot),
\quad k = 0,\ldots,n-1.
\end{equation}
By choosing
$\Psi_i\in\Pi^{-1}(\lambda(\varphi_i,a_{i+1},\cdot))$
for $i=0,\ldots,n-1$,
and $\Psi_n\in\Pi^{-1}(\varphi_n)$,
we can construct
\begin{equation}\label{Phi.cons}
  \Phi = \sum_{i=0}^n \Psi_i.
\end{equation}
Comparing \eqref{Phi.cons} with
\eqref{Lambda.expansion} at $k=0$,
we can easily observe that $\Pi(\Phi) = \varphi$.
Theorem~\ref{Lambda.cons}
characterizes $\Lambda_{a_1,\ldots,a_k}\varphi$;
in particular, when $\varphi$ is a capacity
there exists an $\mathcal{L}$-valued random variable $\mathcal{X}$
satisfying \eqref{marginal} and \eqref{Lambda.rep}.

\begin{theorem}\label{Lambda.cons}
Let $(a_1,\ldots,a_n)$ be a monotone path, and let
\begin{equation}\label{Lambda.cons.g}
\pi^x_{a_1,\ldots,a_k}(V) = \begin{cases}
  1 & \mbox{ if $x\in V$,  $a_i \not\in V$ for all $i=1,\ldots,k$; } \\
  0 & \mbox{ otherwise, }
\end{cases}
\end{equation}
be an indicator function on $\mathcal{L}$.
Then \eqref{Phi.cons} satisfies
\begin{equation}\label{Lambda.cons.eq}
\Lambda_{a_1,\ldots,a_k}\varphi(x)
= \Phi\left(\pi^x_{a_1,\ldots,a_k}\right),
\quad x \in L ,
\end{equation}
for $k=1,\ldots,n$.
\end{theorem}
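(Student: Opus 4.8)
The plan is to read the construction \eqref{Phi.cons} probabilistically. Each summand $\Psi_i$ acts on indicators by $\Psi_i(g)=\sum_{V}f_i(V)g(V)$, where $f_i$ is its M\"{o}bius inverse, and by the marginal condition \eqref{marginal.phi} one has $\Psi_i(\langle y\rangle^*)=\mu_i(y)$, writing $\mu_i:=\lambda(\varphi_i;a_{i+1},\cdot)\in M_1(L)$ (Lemma~\ref{lambda.monotone}) for $i<n$ and $\mu_n:=\varphi_n$. Reading $f_i$ as the (sub-)law of an $\mathcal{L}$-valued $\mathcal{X}_i$ and using $x\in V\iff V\preceq\langle x\rangle^*$, the indicator \eqref{Lambda.cons.g} gives $\Psi_i(\pi^x_{a_1,\ldots,a_k})=\Psi_i\bigl(\{V:x\in V,\ a_j\notin V\text{ for }j\le k\}\bigr)$. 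Since \eqref{Lambda.expansion} reads $\varphi_k=\sum_{i=k}^{n}\mu_i$, the statement \eqref{Lambda.cons.eq} will follow from $\Phi(\pi^x_{a_1,\ldots,a_k})=\sum_{i=0}^{n}\Psi_i(\pi^x_{a_1,\ldots,a_k})$ once I show that every term with $i\ge k$ collapses to $\Psi_i(\langle x\rangle^*)=\mu_i(x)$ while every term with $i<k$ vanishes.

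First I would establish that $a_j\notin\mathcal{X}_i$ almost surely whenever $j\le i$. Arguing as in the proof of Lemma~\ref{Lambda.contract}, $\varphi_j(a_j)=\Lambda_{a_j}\varphi_{j-1}(a_j)=0$, and since each $\Lambda_{a_l}$ preserves nonnegativity one propagates $\varphi_i(a_j)=0$ for all $i\ge j$; as $0\le\mu_i(a_j)\le\varphi_i(a_j)=0$, this yields $\Psi_i(\langle a_j\rangle^*)=\mu_i(a_j)=0$. Consequently, for $i\ge k$ every constraint $a_j\notin V$ with $j\le k\le i$ is automatic, so $\Psi_i(\pi^x_{a_1,\ldots,a_k})=\Psi_i(\langle x\rangle^*)=\mu_i(x)$.

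The second ingredient is that, for $i<n$, the set $a_{i+1}$ belongs to $\mathcal{X}_i$ almost surely on $\{x\in\mathcal{X}_i\}$; equivalently, $\Psi_i(\langle a_{i+1},x\rangle^*)=\Psi_i(\langle x\rangle^*)$. Monotonicity of $\Psi_i$ gives $\Psi_i(\langle a_{i+1},x\rangle^*)\le\Psi_i(\langle x\rangle^*)=\mu_i(x)$, while, $\Psi_i$ being a completely monotone extension of $\mu_i$, Proposition~\ref{attain.ub} gives $\Psi_i(\langle a_{i+1},x\rangle^*)\ge B_{\mu_i}(\langle a_{i+1},x\rangle^*)=\lambda(\mu_i;a_{i+1},x)$. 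Thus the claim reduces to the key identity $\lambda(\mu_i;a_{i+1},x)=\mu_i(x)$, i.e. $\Lambda_{a_{i+1}}\mu_i=0$ for $\mu_i=\lambda(\varphi_i;a_{i+1},\cdot)$, and this self-difference identity is where I expect the main difficulty. To prove it I would take the path $H^{\ast}$ from $a_{i+1}$ to $x$ attaining $\mu_i(x)=\varphi_i(H^{\ast})$ and compare $\mu_i(H^{\ast})$ with $\varphi_i(H^{\ast})$. Setting $\delta:=\Lambda_{a_{i+1}}\varphi_i=\varphi_i-\mu_i$, which is monotone, nonnegative, and satisfies $\delta(a_{i+1})=0$, the quantity $\mu_i(H^{\ast})-\varphi_i(H^{\ast})$ rearranges to $\sum_{\{c,d\}\in E(H^{\ast})}\delta(c\vee d)-\sum_{c\in H^{\ast}}\delta(c)$. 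Pairing each edge term $\delta(c\vee d)\ge\delta(d)$ with its vertex (rooting the path at $x$) and noting the unpaired root contributes $\delta(a_{i+1})=0$, this difference is nonnegative; hence $\lambda(\mu_i;a_{i+1},x)\ge\mu_i(H^{\ast})\ge\varphi_i(H^{\ast})=\mu_i(x)$, which forces equality.

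With both facts available the argument closes quickly. For $i<k$ we have $i+1\le k$, so $a_{i+1}$ is among $a_1,\ldots,a_k$; the second fact makes $\{x\in\mathcal{X}_i,\ a_{i+1}\notin\mathcal{X}_i\}$ null, whence $\Psi_i(\pi^x_{a_1,\ldots,a_k})=0$. For $i\ge k$ the first fact gives $\Psi_i(\pi^x_{a_1,\ldots,a_k})=\mu_i(x)$. Summing over $i$ and invoking \eqref{Lambda.expansion} yields $\Phi(\pi^x_{a_1,\ldots,a_k})=\sum_{i=k}^{n}\mu_i(x)=\varphi_k(x)=\Lambda_{a_1,\ldots,a_k}\varphi(x)$, which is \eqref{Lambda.cons.eq}. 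When $\varphi$ is a capacity, $\Pi(\Phi)=\varphi$ together with \eqref{marginal} then furnishes the $\mathcal{L}$-valued $\mathcal{X}$ realizing \eqref{Lambda.rep}.
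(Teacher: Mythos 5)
Your proof is correct, and its skeleton---evaluating $\Phi=\sum_{i=0}^{n}\Psi_i$ term by term against $\pi^x_{a_1,\ldots,a_k}$, showing the terms with $i\ge k$ collapse to $\mu_i(x)$ and the terms with $i<k$ vanish, then summing via \eqref{Lambda.expansion}---is exactly the paper's. Your handling of the terms $i\ge k$ coincides with the paper's: $\varphi_j(a_j)=0$ propagates to $\varphi_i(a_j)=0$ and hence $\mu_i(a_j)=0$ for $j\le i$, so the constraints $a_j\not\in V$ are automatic on the support of $f_i$. Where you genuinely diverge is the vanishing of the terms $i<k$. The paper disposes of these in one line: since $\lambda(\varphi_i;a_{i+1},a_{i+1})=\lambda(\varphi_i;a_{i+1},\hat{1})=\varphi_i(a_{i+1})$, the mass of $f_i$ on $\{V: a_{i+1}\in V\}$ equals its total mass, so $f_i(V)>0$ forces $a_{i+1}\in V$ unconditionally and $\pi^x_{a_1,\ldots,a_k}$ kills $\Psi_i$ outright. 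You instead prove the conditional statement $\Psi_i(\langle a_{i+1},x\rangle^*)=\Psi_i(\langle x\rangle^*)$ by sandwiching with Proposition~\ref{attain.ub}, which forces you to establish the extra identity $\lambda(\mu_i;a_{i+1},x)=\mu_i(x)$ for $\mu_i=\lambda(\varphi_i;a_{i+1},\cdot)$. Your telescoping proof of that identity is valid: writing $\delta=\varphi_i-\mu_i=\Lambda_{a_{i+1}}\varphi_i$, each increment $\delta(v_{j-1}\vee v_j)-\delta(v_j)$ along the optimal path is nonnegative by Lemma~\ref{Lambda.monotone}, and the leftover term is $\delta(a_{i+1})=0$---though note the unpaired vertex is $a_{i+1}$ only if you root the path at $a_{i+1}$, not at $x$ as your parenthetical says; this is a harmless slip since your displayed conclusion uses the correct root. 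The idempotence-type property $\Lambda_a\bigl(\lambda(\varphi;a,\cdot)\bigr)=0$ you isolate is a nice fact in its own right, but for this theorem it buys nothing over the paper's shorter support argument, which yields the stronger unconditional fact that every $V$ in the support of $f_i$ contains $a_{i+1}$.
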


\begin{proof}
Let $f_i$ be the M\"{o}bius inverse of $\Psi_i$ for $i=0,\ldots,n$.
For each $i = 0,\ldots,{n-1}$,
note that
$\lambda(\varphi_i; a_{i+1}, a_{i+1}) =
\lambda(\varphi_i; a_{i+1}, \hat{1}) = \varphi_i(a_{i+1})$,
and therefore, that ${f_i(V) > 0}$ implies $a_{i+1}\in V$.
In particular,
we find
$\Psi_i\left(\pi^x_{a_1,\ldots,a_k}\right) = 0$ for $i=0,\ldots,k-1$.
For any $i = 1,\ldots,k$
we can observe that
$\lambda(\varphi_j; a_{j+1}, a_i) = 0$ for $j = k,\ldots,n-1$
and that $\varphi_n(a_i) = 0$;
thus, $f_j(V) = 0$ for $j=k,\ldots,n$ if $a_i \in V$
for some $i = 1,\ldots,k$.
Hence, we obtain
$\Psi_j\left(\pi^x_{a_1,\ldots,a_k}\right)
= \Psi_j\left(\langle{x}\rangle^*\right)$
for $j = k,\ldots,n$.
Together we can establish
$$
\Phi\left(\pi^x_{a_1,\ldots,a_k}\right)
= \sum_{j=k}^n \Psi_j\left(\langle{x}\rangle^*\right)
= \sum_{j=k}^{n-1} \lambda(\varphi_j; a_{j+1}, x) + \varphi_n(x)
= \varphi_k(x)
$$
where we can apply \eqref{Lambda.expansion}
for the last equality.
\end{proof}

By Theorem~\ref{pi.theorem}
we can find that
the operator $\nabla_{a_1,\ldots,a_n}$ maps $M_{\infty}(L)$ to itself.
Furthermore, it coincides with the operator $\Lambda_{a_1,\ldots,a_n}$
on $M_{\infty}(L)$.

\begin{lemma}\label{Lambda.cm}
$\Lambda_{a_1,\ldots,a_n}\varphi
= \nabla_{a_1,\ldots,a_n} \varphi$
for $\varphi\in M_{\infty}(L)$.
\end{lemma}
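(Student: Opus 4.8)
The plan is to induct on $n$, with the entire difficulty concentrated in the base case $n=1$, which amounts to the single identity $\lambda(\varphi;a,b) = \varphi(a\wedge b)$ holding for every completely monotone $\varphi$ and every pair $\{a,b\}$. Granting this identity, the operator $\Lambda_a$ acts on $\varphi\in M_{\infty}(L)$ exactly as $\Lambda_a\varphi(x) = \varphi(x) - \lambda(\varphi;a,x) = \varphi(x) - \varphi(a\wedge x) = \nabla_a\varphi(x)$, so that $\Lambda_a\varphi = \nabla_a\varphi$ whenever $\varphi\in M_{\infty}(L)$. This is precisely the $n=1$ case of the lemma.

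To establish the base-case identity I would simply read off a chain of the already-proved propositions. Proposition~\ref{attain.ub} gives $\lambda(\varphi;a,b) = B_{\varphi}(\langle{a,b}\rangle^*)$ for arbitrary $\varphi\in M_1(L)$. When $\varphi$ is moreover completely monotone, Proposition~\ref{cm-prop.2} identifies the Fr\'{e}chet bound $B_{\varphi}$ with the M\"{o}bius extension $\Phi$ of $\varphi$, and Proposition~\ref{cm-prop} evaluates that extension as $\Phi(\langle{a,b}\rangle^*) = \varphi(a\wedge b)$. Stringing these together yields $\lambda(\varphi;a,b) = \varphi(a\wedge b)$, as wanted. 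The reverse inequality $\lambda(\varphi;a,b)\ge\varphi(a\wedge b)$ noted after \eqref{lambda} is automatic and serves only as a consistency check, being realized by the path $(a, a\wedge b, b)$.

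For the inductive step, suppose $\Lambda_{a_1,\ldots,a_{n-1}}\varphi = \nabla_{a_1,\ldots,a_{n-1}}\varphi$ and write $\psi := \nabla_{a_1,\ldots,a_{n-1}}\varphi$. The remark preceding this lemma, namely that $\nabla_{a_1,\ldots,a_{n-1}}$ maps $M_{\infty}(L)$ into itself as a consequence of Theorem~\ref{pi.theorem} (together with Corollary~\ref{cm}), guarantees $\psi\in M_{\infty}(L)$. Applying the base case to $\psi$ and the single index $a_n$ gives $\Lambda_{a_n}\psi = \nabla_{a_n}\psi$. Unfolding the recursive definitions \eqref{Lambda.suc} and \eqref{succ.diff} then yields $\Lambda_{a_1,\ldots,a_n}\varphi = \Lambda_{a_n}\psi = \nabla_{a_n}\psi = \nabla_{a_1,\ldots,a_n}\varphi$, completing the induction.

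The main obstacle lies entirely in the base case, and specifically in recognizing that the path-maximization defining $\lambda$ collapses to $\varphi(a\wedge b)$ precisely because complete monotonicity forces the Fr\'{e}chet lower bound to coincide with the tight M\"{o}bius extension; no direct combinatorial manipulation of paths is needed once Propositions~\ref{attain.ub}, \ref{cm-prop.2} and~\ref{cm-prop} are in hand. The inductive step, by contrast, is purely formal and relies only on the stability of $M_{\infty}(L)$ under $\nabla$.
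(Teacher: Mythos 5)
Your proof is correct and follows essentially the same route as the paper: induction on $n$, with the key step being the identity $\lambda(\varphi;a,x)=\varphi(a\wedge x)$ for completely monotone $\varphi$, obtained by chaining Propositions~\ref{attain.ub}, \ref{cm-prop.2} and~\ref{cm-prop} (the paper cites only the latter two, so your explicit mention of Proposition~\ref{attain.ub} actually makes the chain slightly more complete). The inductive step, resting on the closure of $M_{\infty}(L)$ under $\nabla$ noted just before the lemma, matches the paper's argument exactly.
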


\begin{proof}
We prove it by induction.
Suppose that $\varphi_{n-1} = \Lambda_{a_1,\ldots,a_{n-1}}\varphi
= \nabla_{a_1,\ldots,a_{n-1}} \varphi$.
Since $\varphi_{n-1}\in M_{\infty}(L)$,
by Propositions~\ref{cm-prop} and~\ref{cm-prop.2}  we obtain 
$\lambda(\varphi_{n-1}; a_n, x) = \varphi_{n-1}(a_n\wedge x)$,
and therefore,
$\Lambda_{a_n}\varphi_{n-1} = \nabla_{a_n}\varphi_{n-1}$.
\end{proof}

A monotone path $(a_1,\ldots,a_n)$ is viewed as a linear extension of $L$
if $\{a_1,\ldots,a_n\}$ is the entire set $L$.
As a corollary to Lemma~\ref{Lambda.cm}
we can find the uniqueness of \eqref{Phi.cons} when $\varphi\in M_{\infty}(L)$.

\begin{corollary}\label{Lambda.cm.cor}
If $(a_1,\ldots,a_n)$ is a linear extension of $L$
and $\varphi\in M_{\infty}(L)$ then
\eqref{Phi.cons} is the M\"{o}bius extension of $\varphi$.
\end{corollary}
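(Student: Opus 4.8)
The plan is to verify $\Phi$ against the criterion of Proposition~\ref{cm-prop}: since $\Phi$ is manifestly completely monotone (a sum of completely monotone functionals), it suffices to show that the particular $\Phi$ in \eqref{Phi.cons} satisfies $\Phi(\langle{a,b}\rangle^*) = \varphi(a\wedge b)$ for every pair $\{a,b\}$. The whole argument then reduces to evaluating the summands $\Psi_i$ at dual order ideals of the form $\langle{a,b}\rangle^*$ and recognizing the outcome as the decomposition \eqref{Lambda.expansion} at $k=0$.

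First I would exploit that a linear extension forces $\{a_1,\ldots,a_n\} = L$. Since $\varphi\in M_{\infty}(L)$, Lemma~\ref{Lambda.cm} gives $\varphi_i = \nabla_{a_1,\ldots,a_i}\varphi$, so each $\varphi_i$ is completely monotone. Applying Theorem~\ref{pi.theorem} to $\varphi_n = \nabla_{a_1,\ldots,a_n}\varphi$ with $A = \{a_1,\ldots,a_n\} = L$, the set $\pi_A^x$ is empty, since every $z \le x$ satisfies $z \le z \in A$; hence $\varphi_n \equiv 0$, and we may take $\Psi_n = 0$ in \eqref{Phi.cons}.

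Next I would pin down the canonical choice for the remaining $\Psi_i$. Because $\varphi_i\in M_{\infty}(L)$, Propositions~\ref{cm-prop.2} and~\ref{attain.ub} give $\lambda(\varphi_i;a_{i+1},x) = B_{\varphi_i}(\langle{a_{i+1},x}\rangle^*) = \varphi_i(a_{i+1}\wedge x)$. Writing $f_i$ for the (nonnegative) M\"{o}bius inverse of $\varphi_i$, the M\"{o}bius inverse of $x\mapsto\varphi_i(a_{i+1}\wedge x)$ is $y\mapsto f_i(y)\chi_{\{y\le a_{i+1}\}}$, again nonnegative, so by Corollary~\ref{cm} the component $\lambda(\varphi_i;a_{i+1},\cdot)$ is itself completely monotone. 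I would therefore take each $\Psi_i$ (for $i=0,\ldots,n-1$) to be the M\"{o}bius extension of $\lambda(\varphi_i;a_{i+1},\cdot)$, which is the natural preimage in $\Pi^{-1}$ available precisely because $\varphi\in M_{\infty}(L)$.

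Finally I would evaluate at $\langle{a,b}\rangle^*$. By Proposition~\ref{cm-prop} each such M\"{o}bius extension satisfies $\Psi_i(\langle{a,b}\rangle^*) = \lambda(\varphi_i;a_{i+1},a\wedge b)$, so summing over $i$ (with $\Psi_n = 0$) and invoking \eqref{Lambda.expansion} at $k=0$ yields
$$
\Phi(\langle{a,b}\rangle^*) = \sum_{i=0}^{n-1}\lambda(\varphi_i;a_{i+1},a\wedge b) = \varphi(a\wedge b),
$$
whereupon Proposition~\ref{cm-prop} identifies $\Phi$ as the M\"{o}bius extension of $\varphi$. The only genuinely delicate point is the middle step: confirming that each $\lambda$-difference component remains completely monotone, so that the component-wise M\"{o}bius extensions exist and \eqref{ab-formula} may be applied termwise. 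Once this is secured, the telescoping identity \eqref{Lambda.expansion} does the rest, and no global computation of the M\"{o}bius inverse of $\Phi$ over all of $\mathcal{L}$ is needed.
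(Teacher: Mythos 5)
Your computations are sound as far as they go, and the route through Proposition~\ref{cm-prop} is legitimate, but there is a real gap: you have proved the statement only for one particular instantiation of \eqref{Phi.cons}, not for the construction itself. Equation \eqref{Phi.cons} permits \emph{any} choice $\Psi_i\in\Pi^{-1}(\lambda(\varphi_i;a_{i+1},\cdot))$, and the corollary (as the sentence introducing it makes explicit) is a uniqueness assertion: every such choice yields the M\"{o}bius extension. You silently replace this by the specific choice in which each $\Psi_i$ is the M\"{o}bius extension of its component; your appeal to Proposition~\ref{cm-prop} to get $\Psi_i(\langle{a,b}\rangle^*)=\lambda(\varphi_i;a_{i+1},a\wedge b)$ works only for that choice --- indeed, by Proposition~\ref{cm-prop} this identity for all pairs \emph{characterizes} the M\"{o}bius extension of the component, so for a different admissible $\Psi_i$ the termwise evaluation can fail and your telescoping argument collapses.

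The missing ingredient is the one the paper relies on: the linear-extension hypothesis forces $\varphi_i(x)=0$ for every $x<a_{i+1}$ (each such $x$ is some $a_j$ with $j\le i$, and $\Lambda_{a_1,\ldots,a_i}\varphi$ vanishes there), so
$\lambda(\varphi_i;a_{i+1},x)=\varphi_i(a_{i+1}\wedge x)=\varphi_i(a_{i+1})\chi_{\langle{a_{i+1}}\rangle^*}(x)$
is a scalar multiple of the indicator of a principal dual order ideal. Such a function has a \emph{unique} completely monotone extension, namely $\varphi_i(a_{i+1})\chi_{\{\langle{a_{i+1}}\rangle^*\preceq V\}}$, so $\Pi^{-1}(\lambda(\varphi_i;a_{i+1},\cdot))$ is a singleton and there is in fact no choice to be made; likewise $\Psi_n=0$ is forced because $\varphi_n\equiv 0$. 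With that observation your argument (or the paper's direct identification of each summand) closes immediately. Note that you used the linear-extension hypothesis only to obtain $\varphi_n\equiv 0$; its second, essential use is precisely this collapse of each component onto a principal dual order ideal.
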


\begin{proof}
Observe that $\varphi_n \equiv 0$,
and that \eqref{Phi.cons} becomes
$\Phi = \sum_{i=0}^{n-1} \Psi_i$.
As we have shown in the proof of Lemma~\ref{Lambda.cm},
we have $\lambda(\varphi_i, a_{i+1},x) = \varphi_i(a_{i+1}\wedge x)$
for $i=0,\ldots,n-1$.
Since $(a_1,\ldots,a_n)$ is a linear extension of $L$,
we can see that $\varphi_i(x) = 0$ if $x \le a_i$,
and therefore, that
$\varphi_{i}(a_{i+1}\wedge x)
= \varphi_{i}(a_{i+1})\chi_{\langle{a_{i+1}}\rangle^*}(x)$;
thus, $\lambda(\varphi_i, a_{i+1},\cdot)$
has the unique completely monotone extension
$\Phi_i(V) = \varphi_{i}(a_{i+1})\chi_{\{\langle{a_{i+1}}\rangle^*\preceq V\}}$.
Hence, $\Phi$ must be the M\"{o}bius extension of $\varphi$.
\end{proof}

\section{Probabilistic interpretation}
\setcounter{equation}{0}
\label{prob}

By $C_1(L)$ we denote the collection of capacities on $L$,
and by $C_{\infty}(\mathcal{L})$ the collection of
completely monotone capacities on $\mathcal{L}$.
Proposition~\ref{surjective} indicates that
the projection $\Pi$ is surjective from $C_{\infty}(\mathcal{L})$ onto $C_1(L)$.
In view of \eqref{cdf} and Corollary~\ref{cm}
we can view any completely monotone capacity as a cdf.
In this section we consider
lattice-valued random variables
on some probability space $(\Omega, \mathbb{P})$,
and investigate their properties
which facilitate a probabilistic interpretation of capacities.

\subsection{Dual capacities}
\label{dual}

By $L^*$ we denote the dual lattice of $L$,
in which $\hat{0}$ and $\hat{1}$ respectively
become the maximum and the minimum.
Here we can introduce the successive difference operator $\nabla_{b_1,\ldots,b_n}$ on $L^*$,
and call it the \emph{dual successive difference}, denoted by $\Delta_{b_1,\ldots,b_n}$.
For any sequence $b_1, b_2,\ldots$ of $L$,
it can be constructed with the {dual difference operator}
$$
\Delta_{b_1} \varphi(x)
= \varphi(x) - \varphi(x\vee b_1),
$$
and recursively by
$$
\Delta_{b_1,\ldots,b_n} \varphi
= \Delta_{b_n}(\Delta_{b_1,\ldots,b_{n-1}} \varphi),
\quad n = 2,3,\ldots .
$$
Then a capacity $\varphi$ is called
\emph{completely alternating}
if $\Delta_{b_1,\ldots,b_n} \varphi \le 0$
for any sequence $b_1,\ldots,b_n$ of $L$ and for any $n \ge 1$.
Given $\varphi\in C_1(L)$,
we can introduce $\varphi^*\in C_1(L^*)$
by setting $\varphi^*(x) = 1 - \varphi(x)$ for $x \in L^*$,
and call it the \emph{dual capacity} of $\varphi$.
The duality immediately implies that
$\varphi$ is completely alternating
if and only if
$\varphi^*$ is completely monotone on $L^*$.

Let $\mathcal{X}$ be an $\mathcal{L}$-valued random variable.
Then $\varphi(x) = \mathbb{P}(x \in \mathcal{X})$
is a capacity if and only if 
$\mathbb{P}(\mathcal{X} = \langle{\hat{0}}\rangle^*) = 0$,
in which
$\Phi(U) = \mathbb{P}(\mathcal{X} \preceq U)$
is a completely monotone extension of $\varphi$.
By $\mathcal{L}^*$ we denote the distributive lattice of nonempty order ideals in $L$
(i.e., the distributive lattice of nonempty dual order ideals in $L^*$)
equipped with the reverse inclusion order $\preceq$
(i.e., $D \preceq E$ on $\mathcal{L}^*$ if $D \supseteq E$).
Assume $\mathbb{P}(\mathcal{X} = \langle{\hat{0}}\rangle^*) = 0$.
We can view the complement $\mathcal{X}^c = L\setminus \mathcal{X}$ as an
$\mathcal{L}^*$-valued random variable,
and define the \emph{dual extension}
$$
\Phi^*(D) = \mathbb{P}(\mathcal{X}^c \preceq D),
\quad D\in\mathcal{L}^* .
$$
It is easy to observe that
$$
\mathbb{P}(x \in \mathcal{X}^c)
= \mathbb{P}(x \not\in \mathcal{X})
= 1 - \mathbb{P}(x \in \mathcal{X})
= 1 - \varphi(x) = \varphi^*(x) ,
$$
and therefore, that
$\Phi^*$ is a completely monotone extension of $\varphi^*$.

Suppose that $\varphi$ is completely alternating
and $\Phi^*(D) = \mathbb{P}(\mathcal{X}^c \preceq D)$
is the M\"{o}bius extension of $\varphi^*$.
Then
the \emph{dual M\"{o}bius extension}
$\Phi(U) = \mathbb{P}(\mathcal{X} \preceq U)$
has the M\"{o}bius inverse $f$ supported by the collection
$$
\{U \in \mathcal{L}:
\mbox{$L\setminus U$ is a principal order ideal\/}
\}.
$$

\begin{proposition}\label{ca-prop}
A capacity $\varphi$ is completely alternating
and $\Phi$ is the dual M\"{o}bius extension of $\varphi$
if and only if
\begin{equation}\label{abab-formula}
\Phi(\langle{a,b}\rangle^*)
= \varphi(a) + \varphi(b) - \varphi(a \vee b)
\mbox{ for every pair $\{a,b\}$. }
\end{equation}
\end{proposition}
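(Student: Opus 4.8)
The proof should exploit the duality established earlier: $\varphi$ is completely alternating if and only if its dual capacity $\varphi^*(x) = 1 - \varphi(x)$ is completely monotone on $L^*$, and the dual M\"{o}bius extension $\Phi$ of $\varphi$ corresponds to the M\"{o}bius extension $\Phi^*$ of $\varphi^*$ on $\mathcal{L}^*$. So my plan is to recast the claimed equivalence \eqref{abab-formula} as an instance of Proposition~\ref{cm-prop} applied to $\varphi^*$ on the dual lattice $L^*$.

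\begin{proof}[Proof strategy]
The plan is to translate the statement into the dual setting and invoke Proposition~\ref{cm-prop}. First I would recall that $\varphi$ is completely alternating precisely when $\varphi^*(x) = 1 - \varphi(x)$ is completely monotone on $L^*$, and that $\Phi$ is the dual M\"{o}bius extension of $\varphi$ exactly when $\Phi^*(D) = \mathbb{P}(\mathcal{X}^c \preceq D)$ is the M\"{o}bius extension of $\varphi^*$, as established in the discussion preceding the proposition. Applying Proposition~\ref{cm-prop} to $\varphi^*$ on $L^*$, the latter condition is equivalent to requiring $\Phi^*(\langle{a,b}\rangle^*_{L^*}) = \varphi^*(a \vee b)$ for every pair $\{a,b\}$, where the meet in $L^*$ is the join $\vee$ in $L$ and the up-set in $L^*$ is the down-set $\langle{a,b}\rangle = L \setminus U$ in $L$ for the corresponding $U \in \mathcal{L}$.

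The key step is to make the correspondence between $\Phi^*(\langle{a,b}\rangle)$ and $\Phi(\langle{a,b}\rangle^*)$ explicit. For a fixed pair $\{a,b\}$, I would relate the event $\{\mathcal{X}^c \preceq \langle{a,b}\rangle\}$, i.e.\ $\{a,b\} \subseteq \mathcal{X}^c$ in the appropriate ordering, to the complementary event governing $\Phi(\langle{a,b}\rangle^*)$. An inclusion-exclusion computation should then yield
$$
\Phi(\langle{a,b}\rangle^*) = \mathbb{P}(a \in \mathcal{X}) + \mathbb{P}(b \in \mathcal{X}) - \mathbb{P}(a \vee b \in \mathcal{X}) = \varphi(a) + \varphi(b) - \varphi(a \vee b)
$$
once we know $\Phi^*$ is the M\"{o}bius extension, since the M\"{o}bius extension forces the mass of $\mathcal{X}^c$ to sit on principal order ideals, which precisely removes the surplus term appearing in the expansion of Proposition~\ref{cm-prop}. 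Conversely, if \eqref{abab-formula} holds, the same expansion shows the surplus term vanishes for every pair, giving back the M\"{o}bius-extension property for $\Phi^*$.

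The main obstacle will be bookkeeping the order reversal consistently: one must track carefully that meets in $L^*$ are joins in $L$, that up-sets in $L^*$ are down-sets in $L$, and that the reverse-inclusion order $\preceq$ on $\mathcal{L}^*$ aligns the event $\{\mathcal{X}^c \preceq D\}$ with the correct marginal. Once the dictionary between $\varphi^*$ on $L^*$ and $\varphi$ on $L$ is pinned down, the proposition follows by direct substitution into Proposition~\ref{cm-prop}, with the term $\varphi(a) + \varphi(b) - \varphi(a \vee b)$ being exactly the image of $\varphi^*(a \vee b)$ under the duality $\varphi^* = 1 - \varphi$ after the inclusion-exclusion rearrangement.
\end{proof}
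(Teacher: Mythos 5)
Your proposal is correct and follows essentially the same route as the paper: the paper's proof also passes to the dual random set $\mathcal{X}^c$, performs the inclusion--exclusion computation to exhibit the surplus term $\mathbb{P}(a,b\in\mathcal{X}^c,\,a\vee b\not\in\mathcal{X}^c)$, and identifies its vanishing with the M\"{o}bius-extension property of $\Phi^*$, which is exactly the dual instance of Proposition~\ref{cm-prop} you invoke. The only cosmetic difference is that the paper carries out the bookkeeping probabilistically rather than citing Proposition~\ref{cm-prop} explicitly.
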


\begin{proof}
Let $\mathcal{X}$ be an $\mathcal{L}$-valued random variable
realizing $\varphi(x) = \mathbb{P}(x \in \mathcal{X})$.
Then $\mathcal{X}^c = L \setminus \mathcal{X}$ realizes
its dual $\varphi^*(x) = \mathbb{P}(x \in \mathcal{X}^c)$.
Thus, we obtain
\begin{align*}
\Phi(\langle{a,b}\rangle^*)
& = \mathbb{P}(\mathcal{X} \preceq \langle{a,b}\rangle^*)
   = \mathbb{P}(a \not\in \mathcal{X}^c, b \not\in \mathcal{X}^c)
\\
& = 1 - \mathbb{P}(a \in \mathcal{X}^c) - \mathbb{P}(a \in \mathcal{X}^c)
        + \mathbb{P}(a,b \in \mathcal{X}^c)
\\
& = \varphi(a) + \varphi(b) - \varphi(a \vee b)
+ \mathbb{P}(a,b \in \mathcal{X}^c, a \vee b \not\in \mathcal{X}^c) .
\end{align*}
If $\Phi$ is the dual M\"{o}bius extension of $\varphi$
then $\mathbb{P}(a,b \in \mathcal{X}^c, a \vee b \not\in \mathcal{X}^c) = 0$.
Conversely if \eqref{abab-formula} holds then
$\Phi^*$ must be the M\"{o}bius extension of $\varphi^*$.
\end{proof}

Since $\varphi(H) = \varphi(a) + \varphi(b) - \varphi(a \vee b)$
for a path $H = (a,b)$,
the dual M\"{o}bius extension
$\Phi(\langle{a,b}\rangle^*)$
in \eqref{abab-formula}
attains the Fr\'{e}chet bound $B_{\varphi}(\langle{a,b}\rangle^*)$.

\subsection{Stochastic inequalities}
\label{comparisons}

When $\varphi\in C_{\infty}(L)$ is a cdf
for $L$-valued random variable $X$,
by Theorem~\ref{pi.theorem}
we can show that
\begin{equation}\label{pi.rv}
\mathbb{P}(X\not\in\langle{A}\rangle) = \nabla_A^{\hat{1}}\varphi,
\quad A \subseteq L .
\end{equation}
Suppose that $(X,Y)$ is a pair of 
$L$-valued random variables.
We can construct such a pair satisfying
$\mathbb{P}(X \le Y) = 1$
if and only if
\begin{equation}\label{kko.condition}
\mathbb{P}(X \in U) \le \mathbb{P}(Y \in U)
\quad\mbox{ for every $U \in \mathcal{L}$, }
\end{equation}
given the marginal conditions $\varphi(x) = \mathbb{P}(X \le x)$
and $\psi(y) = \mathbb{P}(Y \le y)$.
By applying \eqref{pi.rv},
we can immediately observe that
\eqref{kko.condition} can be equivalently stated by
\begin{equation}\label{norberg.condition}
\nabla_{a_1,\ldots,a_k}\varphi(\hat{1})
\le \nabla_{a_1,\ldots,a_k}\psi(\hat{1})
\quad\mbox{ for every antichain $\{a_1,\ldots,a_k\}$ in $L$. }
\end{equation}
The stochastic inequality \eqref{kko.condition}
first appeared in the paper by 
Kamae, Krengel, and O'Brien \cite{kko},
and \eqref{norberg.condition} was shown by
Norberg \cite{norberg} in the context of random sets.

Let $\mathcal{X}$ be an $\mathcal{L}$-valued random variable,
and let $Y$ be an $L$-valued random variable.
In this subsection
we will investigate when we can construct a pair $(\mathcal{X}, Y)$
of random variables jointly so that $\mathbb{P}(Y \in \mathcal{X}) = 1$
given the marginal conditions
\begin{equation}\label{marginal.XY}
\varphi(x) = \mathbb{P}(x \in \mathcal{X})
\mbox{ and }
\psi(y) = \mathbb{P}(Y \le y),
\quad x,y \in L .
\end{equation}
The joint cdf $\Gamma(V,y) = \mathbb{P}(\mathcal{X}\preceq V,\, Y \le y)$
is a completely monotone capacity
on the direct product lattice $\mathcal{L}\times L$.
Given a joint cdf $\Gamma$,
we can introduce the expectation $E[w(\mathcal{X},Y)]$,
also denoted by $\Gamma(w)$, for $w \in R(\mathcal{L}\times L)$.
Then we can define the Fr\'{e}chet bound
$$
B^{(\varphi, \psi)}(w) = \max\{\Gamma(w)
  \mbox{ subject to \eqref{marginal.XY}}\},
\quad w \in R(\mathcal{L}\times L) .
$$
Similarly by $\psi(h)$ we denote the expectation $E[h(Y)]$ for $h\in R(L)$.
Recall the dual problem $S^{\varphi}(g)$ in Theorem~\ref{duality.theorem}.
In Theorem~\ref{joint.duality} we will show that
the Fr\'{e}chet bound $B^{(\varphi, \psi)}(w)$ has
the dual problem
\begin{equation}\label{joint.dual.S}
S_{(\varphi,\psi)}(w) = \min\{\psi(h) - S^{\varphi}(g)
\mbox{ subject to (\ref{dual.condition})} \}
\end{equation}
with the inequality constraint
\begin{equation}\label{dual.condition}
w(V,y) \le h(y) - g(V),
\quad (V,y) \in \mathcal{L}\times L,
\end{equation}
for $(g,h)\in R(\mathcal{L})\times R(L)$.

\begin{theorem}\label{joint.duality}
$B^{(\varphi,\psi)}(w) = S_{(\varphi,\psi)}(w)$
for any $w \in R(\mathcal{L}\times L)$.
\end{theorem}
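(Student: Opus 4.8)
The plan is to follow the template of the proof of Theorem~\ref{duality.theorem}: first establish weak duality $B^{(\varphi,\psi)}(w) \le S_{(\varphi,\psi)}(w)$, and then invoke the Hahn--Banach theorem to produce a feasible joint law that attains the dual value. For weak duality, let $\Gamma$ be any joint cdf subject to \eqref{marginal.XY}, with $\Phi$ the marginal cdf of $\mathcal{X}$ (so that $\Pi(\Phi) = \varphi$ and $\Gamma(g) = \Phi(g)$ for functions of $V$ alone), and let $(g,h)$ satisfy \eqref{dual.condition}. Taking expectations in the pointwise inequality $w(V,y) \le h(y) - g(V)$ gives $\Gamma(w) \le \psi(h) - \Phi(g)$, and since $\Phi(g) \ge B_{\varphi}(g) = S^{\varphi}(g)$ by Theorem~\ref{duality.theorem}, we obtain $\Gamma(w) \le \psi(h) - S^{\varphi}(g)$. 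Taking the maximum over $\Gamma$ and the minimum over $(g,h)$ yields $B^{(\varphi,\psi)}(w) \le S_{(\varphi,\psi)}(w)$.

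Next I would record two structural properties of the dual functional $p := S_{(\varphi,\psi)}$. First, $p$ is sublinear: positive homogeneity is immediate from scaling a feasible pair $(g,h)$, and subadditivity follows because the sum of feasible pairs for $w_1$ and $w_2$ is feasible for $w_1 + w_2$, combined with the superadditivity of $S^{\varphi}$ established in Theorem~\ref{duality.theorem}. Second, on the subspace $W_0$ of functions of the form $w'(V,y) = h(y) + r(V)$ with $r(V) = \sum_{x} r_x \chi_{\{x \in V\}}$ as in \eqref{r.form}, the functional $p$ is linear and equals $\psi(h) + \sum_x r_x \varphi(x)$: the upper bound comes from the feasible pair $(h,-r)$ together with the evaluation $S^{\varphi}(-r) = -\sum_x r_x \varphi(x)$ valid on forms \eqref{r.form}, and the matching lower bound then follows from subadditivity. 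Finally, $p(w') \le 0$ whenever $w' \le 0$, as witnessed by the feasible pair $(g,h) = (0,0)$.

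With these in hand, the strong-duality step mirrors Theorem~\ref{duality.theorem}. Since $p$ is sublinear, the Hahn--Banach theorem produces a linear functional $\Gamma$ on $R(\mathcal{L}\times L)$ with $\Gamma(w') \le p(w')$ for every $w'$ and with $\Gamma(w) = p(w)$. Because $p$ is linear on $W_0$, applying $\Gamma \le p$ to both $w'$ and $-w'$ forces $\Gamma = p$ on $W_0$; evaluating at $w'(V,y) = \chi_{\{x \in V\}}$ and at $w'(V,y) = \chi_{\{y \le y_0\}}$ recovers exactly the marginal laws \eqref{marginal.XY}, and at $w' \equiv 1$ it gives total mass one. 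Likewise $\Gamma(w') = -\Gamma(-w') \ge -p(-w') \ge 0$ for $w' \ge 0$, so $\Gamma$ is nonnegative. Hence $\Gamma$ is the joint cdf of a genuine pair $(\mathcal{X}, Y)$ meeting \eqref{marginal.XY}, and therefore $B^{(\varphi,\psi)}(w) \ge \Gamma(w) = S_{(\varphi,\psi)}(w)$. Combined with weak duality this gives the asserted equality.

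The main obstacle is the verification that the Hahn--Banach functional $\Gamma$ is genuinely feasible, i.e. that the single global inequality $\Gamma \le p$ forces both nonnegativity and the exact marginals. This rests entirely on pinning down $p$ precisely on the negative cone and on the test subspace $W_0$, which is where Theorem~\ref{duality.theorem} does the real work (through $S^{\varphi}(0)=0$ and the identity $S^{\varphi}(r) = \sum_x r_x \varphi(x)$ on forms \eqref{r.form}); once $p$ is known to be linear on $W_0$ and nonpositive on $\{w' \le 0\}$, feasibility of $\Gamma$ is automatic.
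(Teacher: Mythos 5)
Your proof is correct and follows essentially the same route as the paper: weak duality by integrating the constraint \eqref{dual.condition} against a feasible $\Gamma$ (using $S^{\varphi}(g)=B_{\varphi}(g)\le\Phi(g)$ from Theorem~\ref{duality.theorem}), then Hahn--Banach applied to the subadditive functional $S_{(\varphi,\psi)}$ to produce a feasible joint law attaining the dual value. The paper compresses the second step to ``analogous to the proof of Theorem~\ref{duality.theorem}''; your explicit verification that the resulting linear functional is nonnegative and recovers the marginals via its agreement with $p$ on the subspace $W_0$ is exactly the intended argument.
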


\begin{proof}
Suppose that a joint cdf $\Gamma$ for $(\mathcal{X},Y)$
attains $B^{(\varphi,\psi)}(w)$,
that $(g,h)$ attains $S_{(\varphi,\psi)}(w)$,
and that $r$ is of the form \eqref{r.form}
satisfying $r \le g$ and $S^{\varphi}(g) = E[r(\mathcal{X})]$.
Then we can observe that
$$
S_{(\varphi,\psi)}(w) = \psi(h) - S^{\varphi}(g)
= E[h(Y)] - E[r(\mathcal{X})]
\ge E[w(\mathcal{X},Y)] = B^{(\varphi,\psi)}(w),
$$
and that the equality holds if
$w(V,y) = h(y) - r(V)$, $(V,y)\in\mathcal{L}\times L$.
Since $S_{(\varphi,\psi)}(w_1+w_2) \le S_{(\varphi,\psi)}(w_1) + S_{(\varphi,\psi)}(w_2)$,
we can apply the Hahn-Banach theorem analogous to the proof of
Theorem~\ref{duality.theorem},
and conclude that $B^{(\varphi,\psi)}(w) = S_{(\varphi,\psi)}(w)$.
\end{proof}

In what follows we consider 
the indicator function
$w_1(V,y) := \chi_{\{y\in V\}}$
for the dual problem \eqref{joint.dual.S}.
Starting with $g \in R(\mathcal{L})$,
we can construct the two monotone functions $h'$ and $g'$ by
\begin{align}
\label{h.dash}
h'(y) & = \max_{V\in\mathcal{L}}\left(w_1(V,y) + g(V)\right),
\quad y \in L; \\
\label{g.dash}
g'(V) & = \min_{y\in L}\left(h'(y) - w_1(V,y)\right),
\quad V \in \mathcal{L}.
\end{align}
By \eqref{g.dash}
we can see that \eqref{dual.condition} holds for $w_1$, $h'$, and $g'$.
Observe that
if $w_1$, $h$, and $g$ satisfy \eqref{dual.condition}
then $h \ge h'$ and $g' \ge g$ so that
$\psi(h) - S^{\varphi}(g) \ge \psi(h') - S^{\varphi}(g')$.
Thus, it suffices for us to consider the case when $g$ and $h$ are monotone.
Moreover, without loss of generality we can set
$g(\langle{\hat{1}}\rangle^*) = 0$ in addition to 
the constraint \eqref{dual.condition}.
Starting with a monotone function $g$ with $g(\langle{\hat{1}}\rangle^*) = 0$,
we obtain $0 \le h'(y) \le 1$ in \eqref{h.dash},
and $g'(V) = \min_{y\in V} h'(y) - 1$ in \eqref{g.dash}.
Therefore, we can further simplify \eqref{joint.dual.S} into
\begin{equation}\label{S.w.one}
S_{(\varphi,\psi)}(w_1) = \min\{\psi(h) - S^{\varphi}(\tilde{h})
   \mbox{ subject to (\ref{h.tilde})}\}
+ 1
\end{equation}
with the constraint
\begin{equation}\label{h.tilde}
  0 \le h(y) \le 1
  \mbox{ and }
  \tilde{h}(V) = \min_{y\in V} h(y),
\quad (V,y) \in \mathcal{L}\times L,
\end{equation}
for any monotone function $h \in R(L)$.

\begin{theorem}\label{comp.theorem}
If
\begin{equation}\label{comp.condition}
\Lambda_{a_1,\ldots,a_{k}}\varphi(\hat{1})
\le \nabla_{a_1,\ldots,a_{k}}\psi(\hat{1})
\quad
\mbox{ for every monotone path $(a_1,\ldots,a_k)$, }
\end{equation}
then there exists a joint cdf $\Gamma$ for $(\mathcal{X},Y)$
satisfying $\mathbb{P}(Y\in \mathcal{X}) = 1$ given the marginal conditions
\eqref{marginal.XY}.
\end{theorem}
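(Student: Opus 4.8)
The plan is to recognize the target as a Fr\'echet-bound computation and show it saturates at~$1$. Taking $w_1(V,y)=\chi_{\{y\in V\}}$, one has $\Gamma(w_1)=\mathbb{P}(Y\in\mathcal{X})$, so $B^{(\varphi,\psi)}(w_1)=\max\{\mathbb{P}(Y\in\mathcal{X}):\text{\eqref{marginal.XY}}\}$. Since the feasible joint cdfs form a compact polytope and $\Gamma\mapsto\Gamma(w_1)$ is linear, the maximum is attained; thus it suffices to prove $B^{(\varphi,\psi)}(w_1)=1$, as the maximizing $\Gamma$ then satisfies $\mathbb{P}(Y\in\mathcal{X})=1$. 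By Theorem~\ref{joint.duality} and the reduction \eqref{S.w.one}, $B^{(\varphi,\psi)}(w_1)=S_{(\varphi,\psi)}(w_1)=\min\{\psi(h)-S^{\varphi}(\tilde h):\text{\eqref{h.tilde}}\}+1$. Taking $h\equiv 1$ gives $\tilde h\equiv 1$, $\psi(h)=1$, and $S^{\varphi}(\tilde h)=B_{\varphi}(1)=\min_{\Pi(\Phi)=\varphi}\sum_V f(V)=1$, so the minimum is $\le 0$. Hence everything rests on the reverse inequality: for every monotone $h\in R(L)$ with $0\le h\le 1$, $\psi(h)\ge S^{\varphi}(\tilde h)$, where $\tilde h(V)=\min_{y\in V}h(y)$.

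To handle this I would pass to level sets. For $t\in[0,1]$ put $U_t=\{y\in L:h(y)\ge t\}$, an up-set since $h$ is monotone, and let $D_t=L\setminus U_t$, a down-set. Then $h(y)=\int_0^1\chi_{U_t}(y)\,dt$, and because $\min_{y\in V}h(y)\ge t$ exactly when $V\subseteq U_t$, one gets $\tilde h(V)=\int_0^1\chi_{\{V\subseteq U_t\}}\,dt$. Consequently $\psi(h)=\int_0^1\mathbb{P}(Y\in U_t)\,dt$, and by \eqref{pi.rv} applied to the cdf $\psi$, $\mathbb{P}(Y\in U_t)=\mathbb{P}(Y\notin D_t)=\nabla_{D_t}^{\hat 1}\psi$. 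As $t$ ranges over $[0,1]$ the sets $D_t$ form a finite nested family of down-sets, so I will work with this discrete chain.

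The heart of the argument is to dominate $S^{\varphi}(\tilde h)$ by a single feasible $\Phi$ built from one monotone path. Since the $D_t$ are nested down-sets, choose a linear extension $(a_1,\ldots,a_N)$ of $L$ in which every $D_t$ occurs as an initial segment $\{a_1,\ldots,a_{k(t)}\}$; such an extension exists precisely because the $D_t$ are nested and are down-sets. Let $\mathcal{X}$ (equivalently $\Phi$) be the $\mathcal{L}$-valued random variable constructed from this path via \eqref{Phi.cons}. By Theorem~\ref{Lambda.cons} at $x=\hat 1$, for each prefix we have $\Lambda_{a_1,\ldots,a_{k(t)}}\varphi(\hat 1)=\mathbb{P}(a_i\notin\mathcal{X},\ i\le k(t))=\mathbb{P}(\mathcal{X}\subseteq U_t)$, using that $\mathcal{X}$ is an up-set so $\{a_i\notin\mathcal{X}\ \forall i\}=\{\mathcal{X}\cap D_t=\emptyset\}$. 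Weak duality from Theorem~\ref{duality.theorem} gives $S^{\varphi}(\tilde h)=B_{\varphi}(\tilde h)\le\Phi(\tilde h)=\int_0^1\mathbb{P}(\mathcal{X}\subseteq U_t)\,dt=\int_0^1\Lambda_{a_1,\ldots,a_{k(t)}}\varphi(\hat 1)\,dt$. Reducing each prefix to the monotone path on the maximal elements of $D_t$ by Lemma~\ref{Lambda.contract} (which leaves both $\Lambda$ and $\nabla$ unchanged), the hypothesis \eqref{comp.condition} yields $\Lambda_{a_1,\ldots,a_{k(t)}}\varphi(\hat 1)\le\nabla_{D_t}^{\hat 1}\psi=\mathbb{P}(Y\in U_t)$ for each $t$. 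Integrating gives $S^{\varphi}(\tilde h)\le\int_0^1\mathbb{P}(Y\in U_t)\,dt=\psi(h)$, completing the reverse inequality and hence $B^{(\varphi,\psi)}(w_1)=1$.

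The main obstacle, and the step I would be most careful about, is the simultaneous realization in the third paragraph: I need \emph{one} random up-set $\mathcal{X}$ whose containment probabilities $\mathbb{P}(\mathcal{X}\subseteq U_t)$ are the successive $\lambda$-differences $\Lambda_{a_1,\ldots,a_{k(t)}}\varphi(\hat 1)$ for \emph{all} levels $t$ at once. This is exactly what Theorem~\ref{Lambda.cons} delivers for prefixes of a fixed monotone path, which is why the construction must start from a single linear extension compatible with the whole nested chain $\{D_t\}$ rather than treating each level independently; verifying that such a compatible linear extension exists, and that the prefix lengths $k(t)$ indeed recover each $D_t$, is the one combinatorial point requiring care. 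Everything else is bookkeeping: the level-set identities for $\psi(h)$ and $\tilde h$, the identification $\nabla_{D_t}^{\hat 1}\psi=\mathbb{P}(Y\in U_t)$, and the final termwise application of the standing hypothesis \eqref{comp.condition}.
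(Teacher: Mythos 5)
Your proposal is correct and follows essentially the same route as the paper: reduce via Theorem~\ref{joint.duality} and \eqref{S.w.one} to showing $\psi(h)\ge S^{\varphi}(\tilde h)$ for monotone $h$, pick a linear extension ordered by the values of $h$ (equivalently, compatible with the nested down-sets $D_t$), build a single $\Phi\in\Pi^{-1}(\varphi)$ from that path via \eqref{Phi.cons} and Theorem~\ref{Lambda.cons}, and compare level sets termwise using \eqref{comp.condition}. The only cosmetic differences are your explicit compactness/attainment remark and the $h\equiv 1$ normalization check (the paper only needs $S_{(\varphi,\psi)}(w_1)\ge 1$), and your invocation of Lemma~\ref{Lambda.contract} is unnecessary since each prefix of the linear extension is already a monotone path to which \eqref{comp.condition} applies directly.
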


\begin{proof}
Suppose that $h$ is a monotone function, and that \eqref{h.tilde} holds for
$(h,\tilde{h})$.
Then we can find a linear extension $(a_1,\ldots,a_N)$ of $L$
such that $h(a_i) \le h(a_j)$ whenever $i < j$.
By Theorem~\ref{Lambda.cons} we can construct
$\Phi\in\Pi^{-1}(\varphi)$ so that
\eqref{Lambda.cons.eq} holds for
the indicator function $\pi^x_{a_1,\ldots,a_k}$
with any choice of $k = 1,\ldots,N$.
For each $0 \le t < h(\hat{1})$,
note that there is some $k\le N-1$ such that
\begin{align*}
A(t) & := \{y\in L: h(y) > t\} = \{a_{k+1},\ldots,a_N\}; \\
\mathcal{A}(t) & := \{V\in\mathcal{L}: \tilde{h}(V) > t\}
= \{V\in\mathcal{L}: a_i\not\in V,\, i=1,\ldots,k\}.
\end{align*}
By applying Theorems~\ref{pi.theorem} and~\ref{Lambda.cons},
we can establish
\begin{align*}
& \psi(h) - S^{\varphi}(\tilde{h})
\ge \psi(h) - \Phi(\tilde{h})
= \int_{0}^{h(\hat{1})} \psi\left(\chi_{A(t)}\right)\,dt
- \int_{0}^{h(\hat{1})} \Phi\left(\chi_{\mathcal{A}(t)}\right)\,dt
\\ & \hspace{0.2in}
\ge
\min_{1\le k\le N-1}\left[
  \psi\left(\chi_{\{a_{k+1},\ldots,a_N\}}\right)
  - \Phi\left(\pi^{\hat{1}}_{a_1,\ldots,a_k}\right)
\right]
\\ & \hspace{0.2in}
= \min_{1\le k\le N-1}\left[
  \nabla_{a_1,\ldots,a_{k}}\psi(\hat{1})
  - \Lambda_{a_1,\ldots,a_{k}}\varphi(\hat{1})
\right]
\ge 0 .
\end{align*}
By Theorem~\ref{joint.duality} and \eqref{S.w.one}
we obtain
$B^{(\varphi,\psi)}(w_1) = S_{(\varphi,\psi)}(w_1) \ge 1$,
which implies the existence of a joint cdf $\Gamma$
satisfying $\mathbb{P}(Y\in \mathcal{X}) = 1$.
\end{proof}

\begin{example}
A stochastically comparable pair $(\mathcal{X},Y)$
does not necessarily satisfy \eqref{comp.condition}.
Let $L$ be the Boolean lattice from Example~\ref{lattice.4},
and let
$$
\Gamma(V,y) = \begin{cases}
1/6 & \mbox{ if $(V,y) = (\langle{12}\rangle^*,12),
(\langle{13,23,34}\rangle^*,34),(\langle{13,23}\rangle^*,234)$ } \\
       & \mbox{ or $(\langle{234}\rangle^*,234)$; } \\
1/3 & \mbox{ if $(V,y) = (\langle{124}\rangle^*,124)$; } \\
0     & \mbox{ otherwise, }
\end{cases}
$$
be a joint cdf for $(\mathcal{X},Y)$.
Then it satisfies $\mathbb{P}(Y\in \mathcal{X}) = 1$,
and $\varphi(x) = \mathbb{P}(x \in \mathcal{X})$
is equal to \eqref{varphi.4}.
By applying the result of Example~\ref{lattice.4},
we can calculate
$\Lambda_{34,12,234}\varphi(\hat{1})
= \Lambda_{34,12}\varphi(\hat{1})
- \Lambda_{34,12}\varphi(234)
= 1/2$.
Since
$
\nabla_{34,12,234}\psi(\hat{1})
= \mathbb{P}(Y \not\in \langle{12,234}\rangle)
= 1/3$,
it does not satisfy
\eqref{comp.condition}
for the monotone path
$(34,12,234)$.
\end{example}

\end{document}